\renewcommand\normalsize{%
\@setfontsize\normalsize\@xpt\@xiipt
\abovedisplayskip 8\p@ \@plus2\p@ \@minus5\p@
\abovedisplayshortskip \z@ \@plus3\p@
\belowdisplayshortskip 8\p@ \@plus3\p@ \@minus3\p@
\belowdisplayskip \abovedisplayskip
\let\@listi\@listI}
\newtheorem{definition}{\bfseries Definition}
\newtheorem{proposition}{\bfseries Proposition}
\newtheorem{theorem}{\bfseries Theorem}
\newtheorem{corollary}{\bfseries Corollary}
\newtheorem{lemma}{\bfseries Lemma}
\newtheorem{remark}{\bfseries Remark}
\newtheorem{problem}{\bfseries Problem}
\def\x{\bm{x}}
\def\u{\bm{u}}
\def\f{\bm{f}}
\newcommand{\z}{\bm{z}}
\newcommand{\cc}{\mathbf{c}}
\newcommand{\G}{\mathbf{G}}
\newcommand{\A}{\mathbf{A}}
\newcommand{\bb}{\mathbf{b}}
\newcommand{\R}{\mathbb{R}}
\newcommand{\IR}{\mathbb{IR}}
\newcommand{\Zc}{\mathcal{Z}}
\newcommand{\mat}[1]{\begin{bmatrix} #1 \end{bmatrix}}
\newif\ifdraft
\title{\LARGE \bf
Safety Verification of Neural Feedback Systems Based on Constrained Zonotopes}
\author{Yuhao Zhang and Xiangru Xu
\thanks{Yuhao Zhang and Xiangru Xu are with the Department of Mechanical Engineering, University of Wisconsin-Madison,
        Madison, WI 53706, USA. Email: 
        {\tt\small \{yuhao.zhang2,xiangru.xu\}@wisc.edu}.}%
}
\begin{document}
\maketitle
\begin{abstract}
Artificial neural networks have recently been utilized in many feedback control systems and introduced new challenges regarding the safety of such systems. This paper considers the safe verification problem for a dynamical system with a given feedforward neural network as the feedback controller by using a constrained zonotope-based approach. 
A novel set-based method is proposed to compute both exact and over-approximated reachable sets for neural feedback systems with linear models, and linear program-based sufficient conditions are presented to verify whether the trajectories of such a system can avoid unsafe regions represented as constrained zonotopes. 
The results are also extended to neural feedback systems with nonlinear models. The computational efficiency and accuracy of the proposed method are demonstrated by two numerical examples where a comparison with state-of-the-art methods is also provided.

\end{abstract}

\section{Introduction}\label{sec:intro}


With the universal approximation theorem \cite{scarselli1998universal}, artificial neural networks (ANNs) have become an effective and powerful tool for many complex applications such as image segmentation \cite{pal1993review}, natural language translation \cite{hinton2012deep}, and autonomous driving \cite{shengbo2019key}. Despite its success, many types of ANNs have been shown to lack robustness to small input perturbations \cite{goodfellow2014explaining}. Therefore, for control systems with ANN components, it's important to formally verify their safety properties before real implementations.


Due to the highly non-convex and nonlinear natures, the reachability analysis and safety verification of ANNs are notoriously difficult: it is shown that even verifying simple properties about ANNs is an NP-complete problem \cite{katz2017reluplex}. Recently, analyzing the safety and robustness of ANNs has attracted attention from the machine learning and formal methods research communities.  By exploiting the piecewise-linear nature of the Rectified Linear Unit (ReLU) activation function, the analysis of ANNs can be reduced to a constraint satisfaction problem that can be solved by mixed-integer linear programming \cite{dutta2017output} or satisfiability modulo theory techniques \cite{katz2017reluplex}. Methods that rely on different set representations, such as polytopes \cite{tran2019parallelizable,vincent2021reachable}, zonotopes \cite{singh2018fast}, constrained zonotopes \cite{chung2021constrained}, and star sets \cite{tran2019star} have been proposed to analyze the reachability of ANNs; however, these works above only focus on analyzing ANNs in isolation.

Several recent works propose methods to compute forward reachable sets for neural feedback systems \cite{dutta2019reachability,huang2019reachnn,xiang2020reachable,ivanov2019verisig,hu2020reach,everett2021reachability}. A reachable set over-approximation method is proposed in \cite{hu2020reach} based on quadratic constraints and semi-definite programming, and the method is extended in \cite{everett2021reachability} by leveraging linear programming (LP) and set partitioning; however, these relaxation-based methods  are unable to compute the exact reachable set of the neural feedback system. Learning-based methods are also proposed to approximate reachable sets for neural feedback systems \cite{chakrabarty2020active,devonport2020data}; however, these methods can only provide a probabilistic guarantee on the correctness of the approximated reachable sets.

In this work, we leverage the properties of constrained zonotopes and deploy set-based analysis techniques to verify the safety of neural feedback systems, which are dynamical systems with a given ReLU-activated feedforward neural network (FNN) as the feedback controller. 
The contributions of this work are at least threefold: (i) Based on the output reachability analysis of FNNs, two novel methods are proposed to compute the exact and over-approximated reachable sets of neural feedback systems;  
(ii) LP-based sufficient conditions are proposed to verify the avoidance of unsafe sets for neural feedback systems; (iii) The proposed reachability analysis and safety verification methods are extended to neural feedback systems with nonlinear models. An overview of the proposed framework is illustrated in Figure \ref{fig:framework}. 

The remainder of the paper is laid out as follows:
Section \ref{sec:pre} introduces preliminaries on constrained zonotopes and interval arithmetics and presents the problem statement. Section \ref{sec:fnn} introduces the constrained zonotope-based output analysis of FNNs in isolation. Section \ref{sec:verify} presents two reachable set computation methods for linear discrete-time systems with FNN controllers as well as two corresponding sufficient conditions to certify the safety of the neural feedback systems. Section \ref{sec:nonlin} extends the reachability analysis and safety verification to systems with nonlinear models. Two numerical examples are shown in Section \ref{sec:sim} before the paper is concluded in Section \ref{sec:concl}.
\begin{figure*}[!ht]
    \centering
        \includegraphics[width=0.75\textwidth]{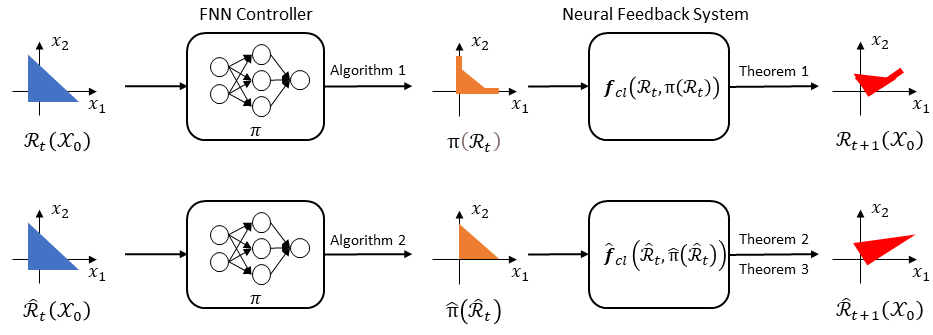}
    \caption{The top (resp. bottom) flowchart illustrates the exact (resp. over-approximated) constrained zonotope-based reachability  analysis, where $\mathcal{R}_t(\mathcal{X}_0)$ (resp. $\hat{\mathcal{R}}_t(\mathcal{X}_0)$) denotes the exact (resp.  over-approximated) reachable set at time $t$ from initial set $\mathcal{X}_0$, $\pi(\mathcal{R}_t)$ (resp.  $\hat{\pi}(\hat{\mathcal{R}}_t)$) denotes the exact  (resp. over-approximated) output set of the FNN controller, and $\f_{cl}$ (resp. $\hat{\f}_{cl}$) is the exact (resp. over-approximated) reachability mapping for the closed-loop neural feedback system.}
      \label{fig:framework}
\end{figure*}


\section{Preliminaries \& Problem Statement}\label{sec:pre}

\subsection{Constrained Zonotope}\label{sec:zono}

\begin{definition} \cite{scott2016constrained}
A set $\Zc \subset \mathbb{R}^{n}$ is a constrained zonotope if there exists $(\mathbf{c}, \mathbf{G}, \mathbf{A}, \mathbf{b}) \in  \mathbb{R}^{n} \times \mathbb{R}^{n \times n_{G}}\times \mathbb{R}^{n_{A} \times n_{G}} \times \mathbb{R}^{n_{A}}$ such that
$
\Zc=\left\{\mathbf{G} \bm{\xi}+\mathbf{c} \;|\; \|\bm{\xi}\|_{\infty} \leq 1, \mathbf{A} \bm{\xi}=\mathbf{b}\right\}.
$
\end{definition}

Denote the constrained zonotope defined by $(\mathbf{c}, \mathbf{G}, \mathbf{A}, \mathbf{b})$ as $CZ\{\mathbf{c}, \mathbf{G}, \mathbf{A}, \mathbf{b}\}$. Denote the unit hypercube by $B_{\infty}$ and define $B_{\infty}(\A,\bb) = \{\bm{\xi}\in B_{\infty}\;|\;\A\bm{\xi} = \bb\}$. It's proven that a constrained zonotope is equivalent to a convex polytope \cite[Theorem 1]{scott2016constrained}. 
Constrained zonotopes are closed under linear map, Minkowski sum and intersection as shown in the following result.

\begin{lemma}\cite[Proposition  1]{scott2016constrained}\label{lemma:set-op}
For every $\mathbf{R} \in \mathbb{R}^{k \times n}$, $\Zc=CZ\left\{ \mathbf{c}_{z},\mathbf{G}_{z},\mathbf{A}_{z},\mathbf{b}_{z}\right\} \subset \mathbb{R}^{n}$, and $\mathcal W= CZ\{ \mathbf{c}_{w},\mathbf{G}_{w},\mathbf{A}_{w}, $ $\mathbf{b}_{w}\} \subset \mathbb{R}^{n}$, 
the following three identities hold:
$$
\begin{aligned}
\mathbf{R}\Zc &=CZ\left\{\mathbf{R} \mathbf{c}_{z}, \mathbf{R G}_{z}, \mathbf{A}_{z}, \mathbf{b}_{z}\right\}, \\
\Zc\oplus \mathcal W &=CZ\left\{ \mathbf{c}_{z}+\mathbf{c}_{w},[\mathbf{G}_{z}\; \mathbf{G}_{w}],\mat{
\mathbf{A}_{z} & \mathbf{0} \\
\mathbf{0} & \mathbf{A}_{w}
},\mat{
\mathbf{b}_{z} \\
\mathbf{b}_{w}
}\right\}, \\
\Zc \cap \mathcal W &=CZ\left\{ \mathbf{c}_{z},[\mathbf{G}_{z}\;\mathbf{0}],\mat{
\mathbf{A}_{z} & \mathbf{0} \\
\mathbf{0} & \mathbf{A}_{w} \\
\mathbf{G}_{z} & -\mathbf{G}_{w}
},\mat{
\mathbf{b}_{z} \\
\mathbf{b}_{w} \\
\mathbf{c}_{w}-\mathbf{c}_{z}
}\right\},
\end{aligned}
$$
where $\oplus$ denotes the Minkowski sum.
\end{lemma}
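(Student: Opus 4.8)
The plan is to prove each of the three identities by showing that a point belongs to the left-hand set if and only if it belongs to the constrained zonotope on the right, working directly from the parametric definition $\Zc = \{\mathbf{G}_z\bm{\xi} + \mathbf{c}_z : \|\bm{\xi}\|_\infty \le 1,\ \mathbf{A}_z\bm{\xi} = \mathbf{b}_z\}$. In each case the argument reduces to exhibiting an explicit correspondence between the coordinate vectors $\bm{\xi}$ of the two representations, so the double inclusion amounts to tracking this (sur)jection of parameters rather than any genuine geometric estimate.

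For the linear map, I would take any $\mathbf{y} \in \mathbf{R}\Zc$, write $\mathbf{y} = \mathbf{R}\mathbf{z}$ with $\mathbf{z} = \mathbf{G}_z\bm{\xi} + \mathbf{c}_z$, $\|\bm{\xi}\|_\infty \le 1$, $\mathbf{A}_z\bm{\xi} = \mathbf{b}_z$, and observe $\mathbf{y} = (\mathbf{R}\mathbf{G}_z)\bm{\xi} + \mathbf{R}\mathbf{c}_z$ under the unchanged constraints, which is exactly $CZ\{\mathbf{R}\mathbf{c}_z, \mathbf{R}\mathbf{G}_z, \mathbf{A}_z, \mathbf{b}_z\}$; the reverse inclusion reuses the same $\bm{\xi}$. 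For the Minkowski sum, I would write a generic element as $\mathbf{z} + \mathbf{w}$ with \emph{independent} parameters $\bm{\xi}_z, \bm{\xi}_w$, concatenate them into $\bm{\xi} = [\bm{\xi}_z^\top\ \bm{\xi}_w^\top]^\top$, and note $\mathbf{z}+\mathbf{w} = [\mathbf{G}_z\ \mathbf{G}_w]\bm{\xi} + (\mathbf{c}_z+\mathbf{c}_w)$. The only nontrivial point is that the box constraint decouples, i.e. $\|\bm{\xi}\|_\infty \le 1$ holds if and only if $\|\bm{\xi}_z\|_\infty \le 1$ and $\|\bm{\xi}_w\|_\infty \le 1$ hold simultaneously, while the two equality systems simply stack into the block-diagonal form shown.

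The intersection identity is the one that requires the key idea and is the main obstacle. A point $\mathbf{y}$ lies in $\Zc \cap \mathcal{W}$ iff there exist $\bm{\xi}_z, \bm{\xi}_w$ in their respective boxes with $\mathbf{A}_z\bm{\xi}_z = \mathbf{b}_z$, $\mathbf{A}_w\bm{\xi}_w = \mathbf{b}_w$, and, crucially, $\mathbf{G}_z\bm{\xi}_z + \mathbf{c}_z = \mathbf{y} = \mathbf{G}_w\bm{\xi}_w + \mathbf{c}_w$. The trick is to keep $\mathbf{y}$ parametrized only through $\Zc$'s data, $\mathbf{y} = \mathbf{G}_z\bm{\xi}_z + \mathbf{c}_z$ (hence the generator block $[\mathbf{G}_z\ \mathbf{0}]$ and center $\mathbf{c}_z$ acting on the concatenated parameter $[\bm{\xi}_z^\top\ \bm{\xi}_w^\top]^\top$), and to encode the matching condition as the \emph{additional} linear equality $\mathbf{G}_z\bm{\xi}_z - \mathbf{G}_w\bm{\xi}_w = \mathbf{c}_w - \mathbf{c}_z$. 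Appending this row to the two stacked equality systems produces precisely the three-block-row constraint matrix and the right-hand side $[\mathbf{b}_z^\top\ \mathbf{b}_w^\top\ (\mathbf{c}_w-\mathbf{c}_z)^\top]^\top$ displayed in the statement.

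I would then close the intersection case by verifying both inclusions against this augmented system: every $\mathbf{y}$ in the intersection supplies a feasible concatenated parameter, and conversely any feasible parameter for the right-hand constrained zonotope yields a $\bm{\xi}_z$ certifying $\mathbf{y} \in \Zc$ and, through the appended equality together with $\mathbf{A}_w\bm{\xi}_w = \mathbf{b}_w$, a $\bm{\xi}_w$ certifying $\mathbf{y} \in \mathcal{W}$. The subtlety worth emphasizing is that although the extra coordinates $\bm{\xi}_w$ never appear in the output map, they are not redundant: they are pinned down by the equality constraints and are exactly what forces membership in $\mathcal{W}$, so the lifted representation carries the full information of the intersection.
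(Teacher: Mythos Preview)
Your proof is correct, but note that the paper does not actually prove this lemma: it is quoted verbatim as a known result from \cite{scott2016constrained} and stated without argument. There is thus no ``paper's own proof'' to compare against. That said, your approach---exhibiting an explicit bijection (or surjection, for the linear-map case) between feasible parameter vectors on the two sides---is exactly the standard one used in the original reference, and each of the three identities is handled correctly, including the key observation for the intersection that the auxiliary block $\bm{\xi}_w$ enters only through the equality constraints and serves solely to enforce membership in $\mathcal W$.
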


Checking the emptiness of a constrained zonotope requires the solution of a LP.
\begin{lemma}\label{lemma:empty} \cite[Proposition 2]{scott2016constrained}
For every $\mathcal{Z} = CZ\{\cc,\G,$ $\A,\bb\}\subset \mathbb{R}^n$, 
$
    \mathcal{Z} \not = \emptyset$ iff $\min\{||\bm{\xi}||_{\infty} \;|\; \A\bm{\xi} = \bb\} \leq 1.
$
\end{lemma}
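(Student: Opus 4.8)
The plan is to unwind the definition of $\mathcal{Z}$ and observe that its non-emptiness is governed entirely by the existence of an admissible coefficient vector $\bm{\xi}$, and then tie that existence to the optimal value of the norm-minimization program. By definition, $\mathcal{Z} = \{\G\bm{\xi}+\cc \mid \|\bm{\xi}\|_{\infty}\leq 1,\ \A\bm{\xi}=\bb\}$, so $\mathcal{Z}$ is non-empty precisely when the feasible coefficient set $B_{\infty}(\A,\bb)=\{\bm{\xi} : \|\bm{\xi}\|_{\infty}\leq 1,\ \A\bm{\xi}=\bb\}$ is non-empty. The generator matrix $\G$ and center $\cc$ play no role in deciding emptiness, since the affine map $\bm{\xi}\mapsto \G\bm{\xi}+\cc$ is defined for every admissible $\bm{\xi}$; this reduces the whole claim to comparing the minimal $\ell_\infty$ norm over the affine constraint set against $1$.

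For the forward direction, I would assume $\mathcal{Z}\neq\emptyset$ and extract a point $\bm{\xi}_0$ with $\A\bm{\xi}_0=\bb$ and $\|\bm{\xi}_0\|_{\infty}\leq 1$. Since $\bm{\xi}_0$ satisfies the sole constraint $\A\bm{\xi}=\bb$ of the minimization problem, it is feasible for that problem, and therefore $\min\{\|\bm{\xi}\|_{\infty} : \A\bm{\xi}=\bb\}\leq \|\bm{\xi}_0\|_{\infty}\leq 1$. For the converse, I would assume the optimal value is at most $1$. The point requiring justification is that the minimum is attained: the constraint set $\{\bm{\xi} : \A\bm{\xi}=\bb\}$ is a closed affine subspace and $\|\cdot\|_{\infty}$ is continuous and coercive, so whenever the feasible set is non-empty a minimizer $\bm{\xi}^{*}$ exists (and were the feasible set empty, the optimal value would be $+\infty>1$, contradicting the hypothesis). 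This $\bm{\xi}^{*}$ satisfies $\A\bm{\xi}^{*}=\bb$ and $\|\bm{\xi}^{*}\|_{\infty}\leq 1$, so $\G\bm{\xi}^{*}+\cc$ is a genuine element of $\mathcal{Z}$, giving $\mathcal{Z}\neq\emptyset$.

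I do not expect any serious obstacle, as the argument is essentially a translation of the definition. The only technical care needed is the attainment of the minimum, which is handled by coercivity and continuity of the $\ell_\infty$ norm over a closed set (equivalently, by minimizing a continuous function over the compact intersection of the constraint set with a sufficiently large sublevel set). Finally, to confirm the remark preceding the lemma that emptiness is decided by a linear program, I would note that $\min\|\bm{\xi}\|_{\infty}$ subject to $\A\bm{\xi}=\bb$ admits the standard LP reformulation $\min t$ subject to $\A\bm{\xi}=\bb$ and $-t\mathbf{1}\leq\bm{\xi}\leq t\mathbf{1}$, whose optimal value being at most $1$ is exactly the stated condition.
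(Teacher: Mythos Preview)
Your argument is correct. Note, however, that the paper does not supply its own proof of this lemma: it is quoted verbatim as \cite[Proposition 2]{scott2016constrained} and used as a black-box tool in the proof of Proposition~\ref{thm:verify}. So there is no in-paper proof to compare against; your unwinding of the definition---reducing non-emptiness of $\mathcal{Z}$ to non-emptiness of $B_{\infty}(\A,\bb)$ and then invoking attainment of the $\ell_\infty$ minimum via coercivity---is exactly the standard justification, and your LP reformulation matches the remark that precedes the lemma.
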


The intersection of a constrained zonotope and a halfspace is still a constrained zonotope as shown in the following result.

\begin{lemma}\label{lemma:halfspace} \cite[Theorem 1]{raghuraman2022set}
If the constrained zonotope $\Zc=CZ\{\mathbf{c}, \mathbf{G}, \mathbf{A}, \mathbf{b}\} \subset \mathbb{R}^{n}$ intersects the hyperplane $\mathcal H=\left\{\x \in \mathbb{R}^{n} \mid \mathbf{h}^{T} \x=f\right\}$ corresponding to the halfspace $\mathcal{H}_{-}=\left\{\x \in \mathbb{R}^{n} \mid \mathbf{h}^{T} \x \leq f\right\}$, then the intersection $\Zc_{h}=\Zc \cap \mathcal{H}_{-}$is a constrained zonotope 
\begin{align*}
\Zc_{h}=\left\{\mathbf{c} , \mat{\G & \mathbf{0}},\mat{
\mathbf{A} & \mathbf{0} \\
\mathbf{h}^{T} \mathbf{G} & \frac{d_{m}}{2}
},\mat{
\mathbf{b} \\
f-\mathbf{h}^{T} \mathbf{c}-\frac{d_{m}}{2}
}\right\}
\end{align*}
where $d_{m}=f-\mathbf{h}^{T} \mathbf{c}+\sum_{i=1}^{n_{G}}\left|\mathbf{h}^{T} \mathbf{G}[:,i]\right|$ and $\mathbf{G}[:,i]$ is the $i$-th column of matrix $\mathbf{G}$.
\end{lemma}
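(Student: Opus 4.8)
The plan is to prove the set equality $\Zc_h = \Zc \cap \mathcal{H}_-$ by double inclusion, reading the extra column $\mathbf{0}$ appended to $\G$ as a \emph{slack generator} and the extra row appended to $(\A,\mathbf{b})$ as a device that encodes the inequality $\mathbf{h}^T\x \le f$ as an equality. I would write a generic element of $\Zc_h$ as $\x = \G\bm{\xi} + \mathbf{c}$ with $\|\bm{\xi}\|_\infty \le 1$, together with a scalar slack $\xi_{n_G+1} \in [-1,1]$ that does not enter $\x$ (its generator column is zero). The two rows of the augmented constraint then read $\A\bm{\xi} = \mathbf{b}$ and $\mathbf{h}^T\G\bm{\xi} + \tfrac{d_m}{2}\xi_{n_G+1} = f - \mathbf{h}^T\mathbf{c} - \tfrac{d_m}{2}$. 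First I would substitute $\mathbf{h}^T\G\bm{\xi} = \mathbf{h}^T\x - \mathbf{h}^T\mathbf{c}$ into the new constraint to obtain the clean identity $\mathbf{h}^T\x = f - \tfrac{d_m}{2}\,(1+\xi_{n_G+1})$, which is the hinge of the whole argument.

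Before invoking this identity I would record two facts about $d_m$. Observe that $f - d_m = \mathbf{h}^T\mathbf{c} - \sum_{i=1}^{n_G}|\mathbf{h}^T\G[:,i]|$ is exactly the minimum of $\mathbf{h}^T\x$ over the \emph{unconstrained} zonotope $\{\G\bm{\xi}+\mathbf{c} : \|\bm{\xi}\|_\infty \le 1\}$, because the functional $\bm{\xi}\mapsto \mathbf{h}^T\G\bm{\xi}$ attains its minimum $-\sum_i|\mathbf{h}^T\G[:,i]|$ over $B_\infty$. Since $\Zc$ is contained in this unconstrained zonotope, every $\x \in \Zc$ obeys $\mathbf{h}^T\x \ge f - d_m$. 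Moreover the hypothesis that $\Zc$ meets $\mathcal{H}$ yields a point with $\mathbf{h}^T\x = f$, so the minimum $f - d_m$ cannot exceed $f$, giving $d_m \ge 0$; this sign is what makes the slack variable behave correctly.

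For $\Zc_h \subseteq \Zc \cap \mathcal{H}_-$, I take any $\x \in \Zc_h$. Dropping the slack coordinate shows $\x \in \Zc$ immediately, and the hinge identity with $1+\xi_{n_G+1} \in [0,2]$ and $d_m \ge 0$ forces $\mathbf{h}^T\x = f - \tfrac{d_m}{2}(1+\xi_{n_G+1}) \le f$, so $\x \in \mathcal{H}_-$. For the reverse inclusion I take $\x \in \Zc \cap \mathcal{H}_-$, keep its generating $\bm{\xi}$, and \emph{solve} the hinge identity for the slack, $\xi_{n_G+1} = \tfrac{2(f-\mathbf{h}^T\x)}{d_m} - 1$ (the degenerate case $d_m=0$, where the new row forces $\mathbf{h}^T\x=f$ and leaves $\xi_{n_G+1}$ free, is treated separately). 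It then remains to check $|\xi_{n_G+1}|\le 1$: the bound $\xi_{n_G+1}\ge -1$ is precisely $\mathbf{h}^T\x \le f$, which holds since $\x\in\mathcal{H}_-$, and the bound $\xi_{n_G+1}\le 1$ is precisely $\mathbf{h}^T\x \ge f-d_m$, the minimum-support inequality from the previous step. This exhibits a feasible augmented point with image $\x$, closing the equality.

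The routine bookkeeping is the substitution and the two-sided feasibility check on $\xi_{n_G+1}$; the step I expect to demand the most care is establishing $d_m\ge 0$ together with the $d_m=0$ boundary, and recognizing $f-d_m$ as the exact lower support value over the unconstrained zonotope, since it is exactly the containment of the \emph{constrained} zonotope in its unconstrained hull that supplies the lower bound keeping the slack feasible.
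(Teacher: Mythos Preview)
Your argument is correct. The hinge identity $\mathbf{h}^T\x = f - \tfrac{d_m}{2}(1+\xi_{n_G+1})$ is exactly the right way to read the augmented constraint, and your two-sided feasibility check on the slack variable, together with the observation that $f-d_m$ is the lower support of $\mathbf{h}^T\x$ over the unconstrained zonotope $\{\G\bm{\xi}+\mathbf{c}:\|\bm{\xi}\|_\infty\le 1\}\supseteq \Zc$, closes both inclusions cleanly. The treatment of $d_m\ge 0$ via the hypothesis $\Zc\cap\mathcal{H}\neq\emptyset$ and of the degenerate case $d_m=0$ is also sound.

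As for comparison: the paper does not supply its own proof of this lemma. It is quoted verbatim as a known result from \cite[Theorem~1]{raghuraman2022set} and used as a black box in Algorithm~\ref{alg:exact}. Your write-up is essentially the standard slack-variable argument one would expect to find in that reference, so there is no methodological divergence to discuss.
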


In the following, we denote $\mathbf{e}_i$ as the $i$-th canonical vector, 
$\mathcal{H}^i = \{\x \in \mathbb{R}^{n} \mid \mathbf{e}_i^T\x = 0\}$, 
$\mathcal{H}^i_- =  \{\x \in \mathbb{R}^{n} \mid \mathbf{e}_i^T\x \leq 0\}$, and $\mathcal{H}^i_+ =  \{\x \in \mathbb{R}^{n} \mid \mathbf{e}_i^T\x \geq 0\}$ for $i=1,\dots,n$.

\subsection{Interval Arithmetic}

A real interval $[a]=[\underline{a},\bar a]$ is a subset of $\R$. 
Denote $\IR$, $\IR^n$ and $\IR^{n\times m}$ as the set of all real intervals of $\R$, all $n$-dimensional real interval vectors and all $n\times m$ real interval matrices, respectively. 
Real arithmetic operations on $\R$ can be extended to $\IR$ as follows: for $\circ \in\{+,-,*,\div\}$, 
$[a]\circ[b]=\{\inf_{x\in[a],y\in[b]}x\circ y, \sup_{x\in[a],y\in[b]}x\circ y\}$. The classical operations for real vectors and real matrices can be directly extended to interval vectors and interval matrices \cite{jaulin2001interval,moore2009introduction}.

For a bounded set $\mathcal{X} \subset \mathbb{R}^n$, denote $\Box \mathcal{X}$ as the interval hull of $\mathcal{X}$. The interval hull $\Box \mathcal{Z}$ of a constrained zonotope $\mathcal{Z}$ can be computed using the LP proposed in \cite{scott2016constrained,rego2020guaranteed}. 
Denote $\lhd ([\mathbf{J}],\mathcal{Z})$ as the operation of computing a constrained zonotopic enclosure of the
product of an interval matrix $[\mathbf{J}]$ and a constrained zonotope $\mathcal{Z}$ using \cite[Theorem 1]{rego2020guaranteed}.

\subsection{Problem Statement}

We consider the following discrete-time control system:
\begin{equation}\label{dt-sys}
    \x{(t+1)} = \f( \x(t)) + B_d \u(t)
\end{equation}
where $\x(t)\in \mathcal{X} \subseteq \mathbb{R}^n$ is the state, $\u(t)\in \mathbb{R}^m$ is the control input,  
$\f:\mathcal{X}\rightarrow \mathbb{R}^n$ is a twice continuously differentiable vector-valued function (i.e., $\f$ is of class $\mathcal{C}^2$), and $B_d \in \mathbb{R}^{n \times m}$ is a given input matrix. Given an initial state $\x(0)$ and a control sequence $\u = \u(0), \u(1), \dots$, the state trajectory of system \eqref{dt-sys} is denoted as $\x = \x(0),\x(1),\dots$.

Assume that the controller in \eqref{dt-sys} is a state-feedback controller $\u(t) = \pi(\x(t))$ that is parameterized by an $\ell$-layer FNN  with the Rectified Linear Unit (ReLU) activation function. 
Letting $\x^{(0)} = \x(t)$ and using the notation from \cite{chung2021constrained}, for each layer $k=1,\dots,\ell-1$, the neuron of the FNN is given by
\begin{equation}\label{equ:NN}
\x^{(k)}=ReLU\left(\mathcal{L}\left(\x^{(k-1)}, \mathbf W^{(k-1)}, \mathbf v^{(k-1)}\right)\right)    
\end{equation}
where $\mathbf W^{(k-1)}$ is the $k$-th layer weight matrix, $\mathbf v^{(k-1)}$ is the $k$-th layer bias vector, $\mathcal{L}(\x, \mathbf W, \mathbf v) =\mathbf W\x+\mathbf v$, and 
$ReLU(\x) =\max \{0, \x\}$.
In the last layer, only the linear operation is applied, i.e., $\pi(\x(t)) = \x^{(\ell)} = \mathcal{L}(\x^{(\ell-1)},\mathbf W^{(\ell-1)},\mathbf v^{(\ell-1)})$.

The closed-loop system with dynamics \eqref{dt-sys} and the controller $\u(t) = \pi(\x(t))$ becomes:
\begin{equation}\label{close-sys}
    \x{(t+1)} = \f( \x(t)) + B_d \pi(\x(t)).
\end{equation}

Given a set of initial states $\mathcal{X}_0\subseteq \mathbb{R}^n$, the (forward) reachable set of closed-loop system \eqref{close-sys} at time $t$ from the set $\mathcal{X}_0$ is denoted as $\mathcal{R}_t(\mathcal{X}_0)\triangleq\{\x(t)\in \mathbb{R}^n | \x(0) \in \mathcal{X}_0, \x{(k+1)} = \f( \x(k)) + B_d \pi(\x(k)), k = 0,1,\dots, t-1\}$, or simply $\mathcal{R}_t$ when $\mathcal{X}_0$ is clear from context. Denote an over-approximation of the set $\mathcal{R}_t(\mathcal{X}_0)$ as $\hat{\mathcal{R}}_t(\mathcal{X}_0)$.

In this paper, we investigate the following problems in which the initial set and the unsafe sets are all assumed to be in the form of constrained zonotopes.

\begin{problem}
Given an initial set $\mathcal{X}_0$ that is represented as a constrained zonotope, the parameters for the FNN controller $\pi$ and a time horizon $T\in \mathbb{Z}_{>0}$, compute the exact reachable set $\mathcal{R}_t(\mathcal{X}_0)$ or an over-approximated reachable set $\hat{\mathcal{R}}_t(\mathcal{X}_0)$ for the closed-loop system \eqref{close-sys} where $t=1,2,\dots,T$.
\end{problem}

\begin{problem}
Given unsafe sets $\{\mathcal{O}_1,\mathcal{O}_2,\dots,\mathcal{O}_N\}$ where $\mathcal{O}_i(1\leq i\leq N)$ is represented as a constrained zonotope, verify whether the state trajectory of the closed-loop system \eqref{close-sys} can avoid the unsafe regions for $t=1,2,\dots,T$.
\end{problem}



\section{Output Analysis of FNNs Based On Constrained Zonotopes}\label{sec:fnn}




\subsection{Exact Output Analysis} \label{sec:nn}


In this subsection, we will compute the exact output set for a given FNN  shown in \eqref{equ:NN} with an input set  represented as a constrained zonotope.

From the definition of the FNN in \eqref{equ:NN}, one can see that the output set of an FNN can be derived layer by layer as the output of layer $k$ is the input of layer $k+1$, for $k=1,\dots,\ell-1$. 
Therefore, we will focus on finding the input-output relationship for one layer. 
From Lemma \ref{lemma:set-op}, if we pass an input represented as a constrained zonotope $\mathcal{Z} = CZ\{\cc,\G,\A,\bb\}$ through a linear layer, then we will obtain the output as $\mathcal{L}(\mathcal{Z}, \mathbf W, \mathbf v) = CZ\{\mathbf W\cc + \mathbf v,\mathbf W\G,\A,\bb\}$. Thus, the only difficulty remaining is to find the output when passing through the ReLU activation function. 
In \cite{tran2019star}, an algorithm is proposed to compute the exact output set for a single neural network layer using the star sets representation. It can be shown that constrained zonotope is a special case of star sets \cite{althoff2021set}. Therefore, we can apply the algorithm in \cite{tran2019star} to compute the exact output set using constrained zonotopes. The details are summarized in Algorithm \ref{alg:exact} for which both the input set and the output set are in the form of constrained zonotopes.


Algorithm \ref{alg:exact} reveals that given a constrained zonotope as the input set $\mathcal{Z}$ to the FNN $\pi$, the exact output of the FNN can be represented as a union of constrained zonotopes: 
$
\pi(\mathcal{Z}) = \bigcup_{i=1}^{n_{\pi}} CZ \{\mathbf{c}_{i}, \mathbf{G}_{i}, \mathbf{A}_{i}, \mathbf{b}_{i}\},
$
where $n_{\pi}$ depends on the depth of the FNN $\pi$, the number of neurons and the output intersections with halfspaces.

\begin{algorithm}
\caption{(Adapted from Algorithm 3.1 in \cite{tran2019star}) Exact output analysis for one layer of FNN}\label{alg:exact}
\KwIn{weight matrix $\mathbf W$, bias vector $\mathbf v$, constrained zonotope input sets $\mathcal{Z} = \mathcal{Z}_1 \cup \mathcal{Z}_2 \cup \dots\cup \mathcal{Z}_{N_z}$}
\KwOut{exact output set $\mathcal{R}$}

\SetKwFunction{reachnn}{ReachNN}
\SetKwFunction{reachrelu}{ReachReLU}
\SetKwFunction{step}{StepReLU}
  
  \SetKwProg{Fn}{Function}{:}{}
  \Fn{$\mathcal{R}$ = \reachnn{$\mathcal{Z}$,$\mathbf W$,$\mathbf v$}}{
        $\mathcal{R} = \emptyset$\\
        \For{$h = 1:N_z$}{
        $\mathcal{I} = \mathbf W \mathcal{Z}_{h} + \mathbf v$\\
        $[lb\quad up]\leftarrow$ range of $\x$ in $\mathcal{I}$\\
        $map = find(lb<0)$\\
        \For {$i$ in $map$} {
        $\mathcal{I} = \step (\mathcal{I},i,lb[i],up[i])$
        }
        $\mathcal{R} =\mathcal{R}\cup \mathcal{I}$}
        \KwRet $\mathcal{R}$\
  }
  
   \SetKwProg{Fn}{Function}{:}{}
  \Fn{$\tilde{\mathcal{I}}$ = \step{${\mathcal{I}}$,$i$,$lb_i$,$up_i$}}{
        ${\mathcal{I}} = \mathcal{I}_1 \cup \mathcal{I}_2\cup\cdots\cup\mathcal{I}_{N_I} \subset \mathbb{R}^{n_I}$\\
        $\tilde{\mathcal{I}} = \emptyset$, $\mathbf E_i = [\mathbf e_1\; \cdots\; \mathbf e_{i-1}\; \mathbf{0}\; \mathbf e_{i+1}\; \cdots\; \mathbf e_{n_I}]$\\
        \For{$j=1:N_I$}{
        \If{$up_i \leq 0$}{
          $\hat{\mathcal{I}} = \mathbf E_i {\mathcal{I}}_j$
          }
        \If{$lb_i<0\; \&\; up_i >0$}{
          $\mathcal{I}_+ = {\mathcal{I}}_j\cap \mathcal{H}^i_+$\\
          $\mathcal{I}_- = {\mathcal{I}}_j\cap \mathcal{H}^i_-$\\
          $\hat{\mathcal{I}} = \mathcal{I}_+ \cup \mathbf E_i \mathcal{I}_-$
        }
        
        $\tilde{\mathcal{I}} = \tilde{\mathcal{I}} \cup \hat{\mathcal{I}}$
        }
        \KwRet $\tilde{\mathcal{I}}$\
  }

\end{algorithm}

\subsection{Over-approximation Output Analysis} \label{sec:approximate}

One major drawback of Algorithm \ref{alg:exact} is that in the worst scenario, the number of constrained zonotopes in the output set will grow exponentially with the number of layers and the number of neurons. Thus, it will dramatically increase the computation burden of output analysis for deep neural networks. In this subsection, we will introduce an algorithm that can over-approximate the output set of an FNN with one constrained zonotope. Similar to the over-approximation methods developed for star sets in \cite{tran2019star}, we construct the output set of each layer of the FNN as a constrained zonotope using a triangle over-approximation of the ReLU activation function for each neuron. As noted in \cite{tran2019star}, the star-based over-approximation algorithm is much less conservative than the zonotope-based \cite{singh2018fast} and abstract domain \cite{singh2019abstract} based approaches in approximating the ReLU function.

Figure \ref{fig:over} shows the main idea of ReLU function over-approximation. Given a range of the $i$-th neuron $\x[i]$ as $[lb_i,up_i]$, the output of the ReLU activation function can be divided into two parts (i.e., $I_1$ and $I_2$) that can be covered by the gray triangle area $\hat{I}$ (including the boundaries) which is the intersection of three halfspaces: $\bm y[i] - \x[i]\geq 0$, $\x[i] \geq 0$ and $(up_i-lb_i)\bm y[i] - up_i(\x[i]-lb_i) \leq 0$.  Using this convex relaxation, we  modify Algorithm \ref{alg:exact} and design  Algorithm \ref{alg:over} to compute the  over-approximated output set as a single constrained zonotope at each time step.

\begin{remark}
From Line 14 to Line 19 in Algorithm \ref{alg:over}, we know that if the lower bound of the neuron is negative and the upper bound of the neuron is positive, then the algorithm will introduce four new variables and three new equality constraints. In the worst case, if we have totally $M$ number of neurons in the FNN, the number of new variables in the over-approximated output set will be $4 M$ and the number of new constraints will be $3 M$. This could cause a computational burden issue for a deep FNN. However, it's possible to utilize the order reduction methods proposed in \cite{scott2016constrained,raghuraman2022set} to reduce the complexity of the approximated constrained zonotopes. 
\end{remark}

\begin{figure}[!th]
    \centering
        \includegraphics[width=0.25\textwidth]{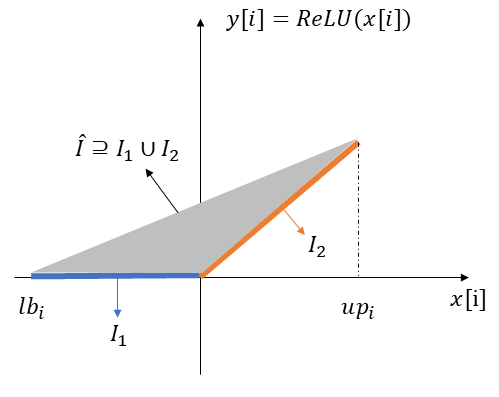}
        \caption{Convex relaxation of the ReLU activation function for over-approximation output analysis.}
        \label{fig:over}
\end{figure}


\begin{algorithm}[!ht]
\caption{Over-approximated output analysis for one layer of FNN}\label{alg:over}
\KwIn{weight matrix $\mathbf W$, bias vector $\mathbf v$, constrained zonotope input set $\mathcal{Z}$}
\KwOut{over-approximated output set $\hat{\mathcal{R}}$}

\SetKwFunction{reachnn}{OverReachNN}
\SetKwFunction{reachrelu}{ReachReLU}
\SetKwFunction{step}{OverStepReLU}
  
  \SetKwProg{Fn}{Function}{:}{}
  \Fn{$\hat{\mathcal{R}}$ = \reachnn{$\mathcal{Z}$,$\mathbf W$,$\mathbf v$}}{
        $\mathcal{I} = \mathbf W \mathcal{Z} + \mathbf v$\\
        $[lb\quad up]\leftarrow$ range of $\x$ in $\mathcal{I}$\\
        $map = find(lb<0)$\\
        \For {$i$ in $map$} {
        $\mathcal{I} = \step (\mathcal{I},i,lb[i],up[i])$
        }
        \KwRet $\hat{\mathcal{R}} = \mathcal{I}$\
  }
  
   \SetKwProg{Fn}{Function}{:}{}
  \Fn{$\hat{\mathcal{I}}$ = \step{${\mathcal{I}}$,$i$,$l_i$,$u_i$}}{
        $\mathcal{I} = CZ\{\cc,\G,\A,\bb\} \subset \mathbb{R}^{n_I}$\\ 
        $\mathbf E_i = [\mathbf e_1\; \cdots\; \mathbf e_{i-1}\; \mathbf{0}\; \mathbf e_{i+1}\; \cdots\; \mathbf e_{n_I}]$\\
        \If{$u_i \leq 0$}{
          $\hat{\mathcal{I}} = \mathbf E_i {\mathcal{I}}$
         }
        \If{$l_i<0\; \&\; u_i >0$}{
          $\hat{\cc} = \mathbf E_i \cc$\\
          $\hat{\G} = \mat{\mathbf E_i\G & u_i \mathbf{e}_i & \mathbf{0}_{n_I\times 1} & \mathbf{0}_{n_I\times 1} & \mathbf{0}_{n_I\times 1}}$\\
         $\hat{\A} = \mat{\A & \mathbf{0}_{n_A\times 1} & \mathbf{0}_{n_A\times 1} & \mathbf{0}_{n_A\times 1} & \mathbf{0}_{n_A\times 1}\\ \mathbf{0} _{1\times n_{G}} & 1 & 1 & 0 & 0 \\ -\G[i,:] & u_i & 0 & u_i-l_i & 0\\ -\G[i,:] & u_i-l_i & 0 & 0 & u_i-l_i}$\\
          $\hat{\bb} = \mat{\bb & 1 & \cc[i]+u_i-l_i & \cc[i]-u_i }^{T}$\\
          $\hat{\mathcal{I}} = CZ\{\hat \cc,\hat\G,\hat\A,\hat\bb\}$ 
        }
        
        \KwRet $\hat{\mathcal{I}}$\
  }

\end{algorithm}


\section{Reachability analysis and Safety Verification for Neural Feedback System with Linear Model}\label{sec:verify}

In this section, we consider a neural feedback system with a linear model and an FNN controller expressed as follows:
\begin{equation}\label{lin-sys}
    \x{(t+1)} = A_d \x(t) + B_d \pi(\x(t))
\end{equation}
where $A_d \in \mathbb{R}^{n \times n}$ is a given state matrix and other terms are the same as defined in \eqref{dt-sys}.

\subsection{Reachability Analysis} \label{sec:lin}
\subsubsection{Exact Reachability Analysis}
Given an initial set $\mathcal{X}_0$ as a constrained zonotope, Algorithm \ref{alg:exact} and \ref{alg:over} can be used to compute the exact and over-approximated output set of FNN $\pi(\mathcal{X}_0)$ respectively. Now we consider the problem of computing the reachable sets $\mathcal{R}_t(\mathcal{X}_0),\; t=1,2,\dots,T,$ for the closed-loop system \eqref{lin-sys}. Let $\mathcal{R}_0(\mathcal{X}_0) = \mathcal{X}_0$ and $\f_{cl}(\x) = A_d\x + B_d \pi (\x)$. 
A trivial idea is to compute separately the two terms on the right hand side of \eqref{lin-sys} by using Lemma \ref{lemma:set-op} and then take their Minkowski sum; however, the resulting set will be a conservative over-approximation of the true reachable set. 

In the following theorem, we present the exact form of $\f_{cl}(\mathcal{Z})$ for a given constrained zonotope $\mathcal{Z}$.

\begin{theorem}\label{thm:sum}
Given any constrained zonotope $\mathcal{Z} = CZ\{\cc,\G,\A,\bb\} \subset\mathbb{R}^n$, $\G\in\mathbb{R}^{n\times n_G}$, $\A\in \mathbb{R}^{n_A\times n_G}$, let $\pi(\mathcal{Z})$ be the computed output set using Algorithm \ref{alg:exact}, i.e., $\pi(\mathcal{Z}) = \cup_{i=1}^{n_{\pi}} CZ\{\cc_i,\G_i,\A_i,\bb_i\}$. Let $n_{G_i}$ be the number of columns of $\G_i$. Then,
\begin{equation}\label{equ:sum}
    \f_{cl}(\mathcal{Z}) = \cup_{i=1}^{n_{\pi}} CZ\{\cc_i^{cl},\G_i^{cl},\A_i^{cl},\bb_i^{cl}\}
\end{equation}
where 
\begin{align*}
\G_i^{cl} &= A_d \mat{\G &\mathbf{0}_{n \times (n_{G_i}-n_G)}}+B_d\G_i,\\
\cc_i^{cl} &= A_d\cc + B_d\cc_i,\;\A_i^{cl} = \A_i,\;\bb_i^{cl}=\bb_i.
\end{align*}
\end{theorem}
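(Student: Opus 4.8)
\emph{Proof proposal.} The plan is to exploit a structural invariant that is maintained by Algorithm \ref{alg:exact}: as the input set $\mathcal{Z}$ is propagated through the FNN, the original input $\x$ and the running layer output are jointly parameterized by a single (progressively extended) generator vector $\bm{\xi}_i$, and the affine map sending $\bm{\xi}_i$ back to $\x$ never changes apart from the appending of zero columns. Concretely, I would show that for each output piece $CZ\{\cc_i,\G_i,\A_i,\bb_i\}$ with $\G_i \in \mathbb{R}^{m \times n_{G_i}}$, every feasible coordinate vector $\bm{\xi}_i \in B_{\infty}(\A_i,\bb_i)$ simultaneously encodes an input $\x = \mat{\G & \mathbf{0}_{n\times(n_{G_i}-n_G)}}\bm{\xi}_i + \cc \in \mathcal{Z}$ and its image $\pi(\x) = \G_i\bm{\xi}_i + \cc_i$. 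Granting this co-parameterization, the theorem reduces to a one-line substitution.

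First I would establish the invariant by induction over the operations performed in Algorithm \ref{alg:exact}. The base case is the input layer, where $\x = \G\bm{\xi} + \cc$ by definition of $\mathcal{Z}$. For the inductive step I would check each operation the algorithm applies to a current piece $CZ\{\cc^{(j)},\G^{(j)},\A^{(j)},\bb^{(j)}\}$: (i) a linear layer $\mathcal{L}(\cdot,\mathbf W,\mathbf v)$ and the projection $\mathbf E_i$ are, by Lemma \ref{lemma:set-op}, pure left-multiplications acting only on $(\cc^{(j)},\G^{(j)})$, so they leave $(\A^{(j)},\bb^{(j)})$ and the generator vector $\bm{\xi}^{(j)}$ untouched and hence do not affect the separate affine readout $\bm{\xi}^{(j)}\mapsto\x$; (ii) the halfspace intersections with $\mathcal{H}^i_+$ and $\mathcal{H}^i_-$ are handled by Lemma \ref{lemma:halfspace}, which keeps $\cc^{(j)}$ fixed, appends a single zero column to $\G^{(j)}$, and appends one row and one column to $(\A^{(j)},\bb^{(j)})$. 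Appending a zero column to the input-recovery matrix $\mat{\G & \mathbf{0}}$ preserves the identity $\x = \mat{\G & \mathbf{0}}\bm{\xi}^{(j)} + \cc$ for the extended $\bm{\xi}^{(j)}$. Since the last layer applies only a linear map, the invariant survives to the output, which in particular gives $\A_i^{cl} = \A_i$ and $\bb_i^{cl} = \bb_i$.

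With the invariant in hand, for any $\x$ in the input subregion associated with piece $i$ and its encoding $\bm{\xi}_i\in B_{\infty}(\A_i,\bb_i)$, I substitute both affine expressions into $\f_{cl}$:
\begin{equation*}
\f_{cl}(\x) = A_d\x + B_d\pi(\x) = A_d\!\left(\mat{\G & \mathbf{0}}\bm{\xi}_i + \cc\right) + B_d\!\left(\G_i\bm{\xi}_i + \cc_i\right) = \G_i^{cl}\bm{\xi}_i + \cc_i^{cl},
\end{equation*}
which recovers $\G_i^{cl} = A_d\mat{\G & \mathbf{0}_{n\times(n_{G_i}-n_G)}} + B_d\G_i$ and $\cc_i^{cl} = A_d\cc + B_d\cc_i$. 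As $\bm{\xi}_i$ ranges over $B_{\infty}(\A_i,\bb_i)$, the pair $(\x,\pi(\x))$ ranges over exactly the graph of $\pi$ restricted to the region covered by piece $i$, so the resulting set of values equals $CZ\{\cc_i^{cl},\G_i^{cl},\A_i,\bb_i\}$. Taking the union over $i=1,\dots,n_\pi$, and using that the pieces cover $\mathcal{Z}$ by the correctness of Algorithm \ref{alg:exact}, yields \eqref{equ:sum}.

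The main obstacle is the inductive invariant, not the closing algebra: one must argue rigorously that the \emph{same} coordinate vector $\bm{\xi}_i$ which generates the output $\pi(\x)$ also generates the input $\x$, i.e.\ that input and output remain co-parameterized throughout the layer-by-layer propagation. The delicate point is that linear layers overwrite $(\cc,\G)$, which describe the value of the current layer, while the input-recovery map must be tracked as a \emph{separate} affine readout of the shared $\bm{\xi}$; verifying that every generator and constraint appended by the halfspace splits of Lemma \ref{lemma:halfspace} merely pads this readout with zeros is where the argument must be most careful.
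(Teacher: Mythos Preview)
Your proposal is correct and follows essentially the same approach as the paper: both hinge on the fact that a single generator vector $\bm{\xi}_i\in B_\infty(\A_i,\bb_i)$ co-parameterizes the original input $\x=\mat{\G&\mathbf{0}}\bm{\xi}_i+\cc$ and the FNN output $\pi(\x)=\G_i\bm{\xi}_i+\cc_i$, after which the closed-loop substitution is immediate. The only difference is presentational: the paper asserts the needed structural facts about $(\G_i,\A_i,\bb_i)$ in one sentence (``it's easy to check that $n_{G_i}\ge n_G$, the first $n_A$ rows of $\A_i$ are $[\A\ \mathbf{0}]$ \ldots'') and then argues the two inclusions separately, whereas you derive these facts by an explicit induction over the operations of Algorithm~\ref{alg:exact} and package both inclusions into the single graph statement.
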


\begin{proof}
Denote the right hand side of \eqref{equ:sum} as ${\mathcal{Z}}_R$. Firstly we prove $\f_{cl}(\mathcal{Z}) \subseteq {\mathcal{Z}}_R$. Let $\x$ be any element of set $\mathcal{Z}$, i.e., $\x\in \mathcal{Z}$. We know there exists $\bm{\xi}_1$ such that $\x = \cc+\G\bm{\xi}_1$, $||\bm{\xi}_1||_{\infty} \leq 1$ and $\A\bm{\xi}_1 = \bb$. From the procedures in Algorithm \ref{alg:exact} and Lemma \ref{lemma:halfspace}, it's easy to check that $n_{G_i} \geq n_G$ and the $(n_G+1)$-th to the $n_{G_i}$-th columns of $G_i$ are all zeros. Also, the first $n_A$ rows of $\A_i$ are $\mat{\A & \mathbf{0}_{(n_{G_i}-n_G)\times n_A}}$ and the first $n_A$ rows of $\bb_i$ are $\bb$. 

Since $\pi(\x)\in\pi(\mathcal{Z})$, there must exist $i\in\{1,2,\dots,n_{\pi}\}$ such that $\pi(\x)\in CZ\{\cc_i,\G_i,\A_i,\bb_i\}$. Thus, we know there exists $\bm{\xi} = \mat{\bm{\xi}_1^T & \bm{\xi}_2^T}^T$ such that $\bm{\xi}\in B_{\infty}(\A_i,\bb_i)$ and $\pi(\x) = \cc_i + \G_i\bm{\xi}$. Therefore, $\f_{cl}(\x) = A_d \x + B_d \pi(\x) = A_d (\cc + \G\bm{\xi}_1) + B_d (\cc_i+\G_i \mat{\bm{\xi}_1^T & \bm{\xi}_2^T}^T) = (A_d \cc + B_d\cc_i) + (A_d\mat{\G & \mathbf{0}_{n \times (n_{G_i}-n_G)}} + B_d \G_i) \bm{\xi} = \cc_i^{cl} +\G_i^{cl}\bm{\xi} $, where $\bm{\xi}\in B_{\infty}(\A_i,\bb_i) = B_{\infty}(\A_i^{cl},\bb_i^{cl})$. Then, we have $\f_{cl}(\x) \in CZ\{\cc_i^{cl},\G_i^{cl},\A_i^{cl},\bb_i^{cl}\} \subseteq {\mathcal{Z}}_R$. Since $\x$ is arbitrary, we know that $\f_{cl}(\mathcal{Z}) \subseteq {\mathcal{Z}}_R$.

Next, we show that ${\mathcal{Z}}_R \subseteq \f_{cl}(\mathcal{Z})$. Let $\z\in {\mathcal{Z}}_R$. Then, $\exists i \in \{1,2,\dots,n_{\pi}\}$ such that $\z \in CZ\{\cc_i^{cl},\G_i^{cl},\A_i^{cl},\bb_i^{cl}\}$. Therefore, $\exists \bm{\xi}\in B_{\infty}(\A_i^{cl},\bb_i^{cl})$ such that $\z = \cc_i^{cl} + \G_i^{cl}\bm{\xi} = A_d\cc + B_d\cc_i + (A_d \mat{\G &\mathbf{0}_{n \times (n_{G_i}-n_G)}}+B_d\G_i)\bm{\xi}$. Partitioning $\bm{\xi}$ as $\bm{\xi} = \mat{\bm{\xi}_1^T & \bm{\xi}_2^T}^T$, it follows that $\bm{\xi}_1\in B_{\infty}(\A,\bb)$, $\z = A_d\cc + (A_d \mat{\G &\mathbf{0}_{n \times (n_{G_i}-n_G)}}\mat{\bm{\xi}_1^T & \bm{\xi}_2^T}^T + B_d\cc_i +B_d\G_i \bm{\xi} = A_d\cc + A_d\G\bm{\xi}_1 + B_d\cc_i +B_d\G_i \bm{\xi} $. Let $\x = \cc + \G\bm{\xi}_1$, then $\pi(\x) = \cc_i +\G_i \bm{\xi}$, which implies $\x\in \mathcal{Z}$. Thus, $\z = A_d \x + B_d \pi(\x) = \f_{cl}(\x)\in \f_{cl}(\mathcal{Z})$. Since $\z$ is arbitrary, ${\mathcal{Z}}_R \subseteq \f_{cl}(\mathcal{Z})$. Thus, we conclude that $\f_{cl}(\mathcal{Z}) = {\mathcal{Z}}_R $.
\end{proof}

Theorem 1 can be extended to the case where the input set is a union of constrained zonotopes as shown below.

\begin{corollary}\label{cor:sum}
Given a union of $M$ constrained zonotopes $\mathcal{Z} = \cup_{j=1}^M \mathcal{Z}_j \subset\mathbb{R}^n$, where $\mathcal{Z}_j = CZ\{\cc_j,\G_j,\A_j,\bb_j\}$. Let $\pi(\mathcal{Z}_j) = \cup_{i=1}^{n_{\pi,j}} CZ\{\cc_{i,j},\G_{i,j},\A_{i,j},\bb_{i,j}\}$ be the computed output set using Algorithm \ref{alg:exact} and set $\mathcal{Z}_j$ for $j=1,2,\dots,M$. Then, 
 \begin{equation}\label{equ:sum2}
 \begin{aligned}
     \f_{cl}(\mathcal{Z}) =& \cup_{j=1}^M \f_{cl}(\mathcal{Z}_j) \\
     =&  \cup_{j=1}^M \cup_{i=1}^{n_{\pi,j}} CZ\{\cc_{i,j}^{cl},\G_{i,j}^{cl},\A_{i,j}^{cl},\bb_{i,j}^{cl}\},
 \end{aligned}
 \end{equation}
where $\A_{i,j}^{cl} = \A_{i,j}$, $\bb_{i,j}^{cl}=\bb_{i,j}$, $\cc_{i,j}^{cl} = A_d\cc_j + B_d\cc_{i,j}$, and $\G_{i,j}^{cl} = A_d [\G_j \;\mathbf{0}_{n \times (n_{G_{i,j}}-n_{G_j})}]+B_d\G_{i,j}$, with $n_{G_j}$ the number of columns of $\G_j$ and $n_{G_{i,j}}$ the number of columns of $\G_{i,j}$.
\end{corollary}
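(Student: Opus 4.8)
The plan is to obtain \eqref{equ:sum2} directly from Theorem \ref{thm:sum} together with one elementary set-theoretic fact, so that essentially no new computation is required. The two ingredients I would assemble are: (i) the image of a union under the pointwise map $\f_{cl}$ is the union of the images, and (ii) Algorithm \ref{alg:exact} processes each constituent constrained zonotope $\mathcal{Z}_j$ in isolation, which is exactly the hypothesis under which Theorem \ref{thm:sum} applies to each $\mathcal{Z}_j$.

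First I would establish the first equality, $\f_{cl}(\mathcal{Z}) = \cup_{j=1}^M \f_{cl}(\mathcal{Z}_j)$. Since $\f_{cl}$ acts pointwise, $\f_{cl}(\mathcal{Z}) = \{\f_{cl}(\x)\mid \x\in\mathcal{Z}\}$, and because $\mathcal{Z} = \cup_{j=1}^M \mathcal{Z}_j$, a point $\z$ belongs to $\f_{cl}(\mathcal{Z})$ if and only if $\z = \f_{cl}(\x)$ for some $\x$ lying in some $\mathcal{Z}_j$, i.e. if and only if $\z\in\f_{cl}(\mathcal{Z}_j)$ for some $j$. This holds regardless of whether the $\mathcal{Z}_j$ overlap, and no property specific to constrained zonotopes is needed here.

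Next I would apply Theorem \ref{thm:sum} separately to each $\mathcal{Z}_j$. By hypothesis the output set computed by Algorithm \ref{alg:exact} on input $\mathcal{Z}_j$ is $\pi(\mathcal{Z}_j) = \cup_{i=1}^{n_{\pi,j}} CZ\{\cc_{i,j},\G_{i,j},\A_{i,j},\bb_{i,j}\}$, so invoking the theorem with $(\cc_j,\G_j,\A_j,\bb_j)$ in place of $(\cc,\G,\A,\bb)$ and $(\cc_{i,j},\G_{i,j},\A_{i,j},\bb_{i,j})$ in place of $(\cc_i,\G_i,\A_i,\bb_i)$ yields $\f_{cl}(\mathcal{Z}_j) = \cup_{i=1}^{n_{\pi,j}} CZ\{\cc_{i,j}^{cl},\G_{i,j}^{cl},\A_{i,j}^{cl},\bb_{i,j}^{cl}\}$, with the closed-loop parameters given precisely by the stated formulas (the padding dimension $n_{G_{i,j}}-n_{G_j}$ playing the role of $n_{G_i}-n_G$). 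Substituting this into the first equality and taking the union over $j$ produces the second equality in \eqref{equ:sum2}.

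The only point demanding care, and it is bookkeeping rather than a genuine obstacle, is to confirm that the structural facts used inside the proof of Theorem \ref{thm:sum}, namely that $n_{G_{i,j}}\geq n_{G_j}$, that the trailing columns of $\G_{i,j}$ vanish, and that the leading rows of $\A_{i,j}$ and $\bb_{i,j}$ reproduce $\A_j$ and $\bb_j$, continue to hold for every $j$. Because Algorithm \ref{alg:exact} treats each input zonotope independently, these facts are inherited verbatim for each $\mathcal{Z}_j$, so the per-$j$ application of Theorem \ref{thm:sum} is legitimate and the corollary follows.
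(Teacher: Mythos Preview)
Your proposal is correct and mirrors exactly what the paper does: the corollary is stated immediately after Theorem \ref{thm:sum} as a direct extension (``Theorem 1 can be extended to the case where the input set is a union of constrained zonotopes''), with no separate proof given. The paper relies implicitly on precisely the two ingredients you identify, namely that $\f_{cl}$ commutes with unions and that Theorem \ref{thm:sum} applies to each $\mathcal{Z}_j$ individually; your write-up simply makes this explicit.
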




Using Theorem \ref{thm:sum} and Corollary \ref{cor:sum}, we can compute 
the exact reachable sets of closed-loop system \eqref{lin-sys} as follows:
\begin{equation}\label{equ:reach-cl}
    \begin{aligned}
    \mathcal{R}_0 & = \mathcal{X}_0, \\
    \mathcal{R}_t & = \f_{cl}(\mathcal{R}_{t-1}),\; t=1,\dots,T.
    \end{aligned}
\end{equation}

\begin{remark}
In \cite{hu2020reach,everett2021reachability}, over-approximation reachability computation algorithms are proposed for discrete-time systems with FNN controllers. The main idea there is to bound the nonlinearities of FNNs with quadratic or linear constraints. 
In contrast, the method proposed in this work includes the FNN nonlinearities in the set-based operations, 
and therefore, provides a different way for handling reachability analysis of neural feedback systems without bounding or relaxing the FNN nonlinearities. 

The price of accuracy, however, is that the number of constrained zonotopes and the order of constrained zonotopes will grow exponentially. Thus, order reduction techniques as proposed in \cite{scott2016constrained,raghuraman2022set} are needed for analyzing deep neural networks. Nevertheless, as shown in Section \ref{sec:sim}, the computation time of our exact analysis algorithm is comparable with other state-of-the-art algorithms.
\end{remark}
\subsubsection{Over-approximation Reachability Analysis}
It is computationally demanding to carry out the exact reachability analysis based on \eqref{equ:reach-cl} when $n_\pi$, which depends on the depth and width of the FNN, is large. 
The following theorem shows that an over-approximated reachable set can be computed by using Algorithm \ref{alg:over}, to achieve a trade-off between accuracy and efficiency for the reachability analysis.
\begin{theorem}\label{thm:appro}
Given any constrained zonotope $\mathcal{Z} = CZ\{\cc,\G,\A,\bb\} \subset\mathbb{R}^n$, $\G\in\mathbb{R}^{n\times n_G}$, $\A\in \mathbb{R}^{n_A\times n_G}$, let $\hat{\pi}(\mathcal{Z})$ be the computed output set using  
Algorithm \ref{alg:over}, i.e., $\hat{\pi}(\mathcal{Z}) = CZ\{\hat\cc,\hat\G,\hat\A,\hat\bb\}\supseteq \pi(\mathcal{Z})$. Let $n_{\hat G}$ be the number of columns of $\hat\G$. Then, an over-approximated range of $\f_{cl}(\mathcal{Z})$ can be computed as:
\begin{equation}\label{equ:sum3}
    \hat \f_{cl}(\mathcal{Z}) = CZ\{\hat \cc_{cl},\hat\G_{cl},\hat\A_{cl},\hat\bb_{cl}\} \supseteq \f_{cl}(\mathcal{Z})
\end{equation}
where 
\begin{align*}
\hat\G_{cl} &= A_d \mat{\G &\mathbf{0}_{n \times (n_{\hat G}-n_G)}}+B_d\hat\G,\\
\hat\cc_{cl} &= A_d\cc + B_d\hat\cc,\; \hat\A_{cl} = \hat\A,\; \hat\bb_{cl}=\hat\bb. 
\end{align*}
\end{theorem}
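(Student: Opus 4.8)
The plan is to prove the set inclusion $\hat\f_{cl}(\mathcal{Z}) \supseteq \f_{cl}(\mathcal{Z})$ by establishing the chain of containments $\f_{cl}(\mathcal{Z}) \subseteq \{A_d\x + B_d\u : \x\in\mathcal{Z},\; \u\in\hat\pi(\mathcal{Z})\} = \hat\f_{cl}(\mathcal{Z})$. The crucial structural fact, already exploited in the proof of Theorem~\ref{thm:sum}, is that Algorithm~\ref{alg:over} never deletes or alters the original $n_G$ generator columns or the original $n_A$ constraint rows: it only appends new generator columns (all with a leading block matching the original generators in the shared coordinates) and new constraint rows. Concretely, the first $n_G$ columns of $\hat\G$ restricted to the state coordinates equal $\G$, the first $n_A$ rows of $\hat\A$ are $[\A\;\;\mathbf{0}]$, and the first $n_A$ entries of $\hat\bb$ are $\bb$. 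This is what lets a single factor variable $\bm\xi_1$ play a double role: it parameterizes both the input $\x\in\mathcal{Z}$ and the corresponding output generator.

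First I would take an arbitrary $\x\in\mathcal{Z}$, so $\x = \cc + \G\bm\xi_1$ with $\|\bm\xi_1\|_\infty\le 1$ and $\A\bm\xi_1 = \bb$. Since $\hat\pi(\mathcal{Z}) \supseteq \pi(\mathcal{Z})$ and $\pi(\x)\in\pi(\mathcal{Z})$, we have $\pi(\x)\in\hat\pi(\mathcal{Z}) = CZ\{\hat\cc,\hat\G,\hat\A,\hat\bb\}$, so there is some $\bm\xi = [\bm\xi_1^T\;\;\bm\xi_2^T]^T$ with $\|\bm\xi\|_\infty\le 1$, $\hat\A\bm\xi = \hat\bb$, and $\pi(\x) = \hat\cc + \hat\G\bm\xi$; the key point (inherited from the column/row structure above) is that the first block of this witness is exactly the same $\bm\xi_1$ that produced $\x$. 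Then I would compute directly
\begin{align*}
\f_{cl}(\x) &= A_d\x + B_d\pi(\x) = A_d(\cc+\G\bm\xi_1) + B_d(\hat\cc + \hat\G\bm\xi)\\
&= (A_d\cc + B_d\hat\cc) + \bigl(A_d[\G\;\;\mathbf{0}_{n\times(n_{\hat G}-n_G)}] + B_d\hat\G\bigr)\bm\xi = \hat\cc_{cl} + \hat\G_{cl}\bm\xi,
\end{align*}
using $A_d\G\bm\xi_1 = A_d[\G\;\;\mathbf{0}]\bm\xi$ because the padded zeros annihilate $\bm\xi_2$. Since $\bm\xi$ satisfies $\hat\A\bm\xi=\hat\bb$ and $\|\bm\xi\|_\infty\le 1$, i.e.\ $\bm\xi\in B_\infty(\hat\A_{cl},\hat\bb_{cl})$, this exhibits $\f_{cl}(\x)\in CZ\{\hat\cc_{cl},\hat\G_{cl},\hat\A_{cl},\hat\bb_{cl}\}$, and arbitrariness of $\x$ gives the desired inclusion.

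The main obstacle, and the step I would be most careful about, is justifying that the factor $\bm\xi_1$ appearing in the representation of $\x$ can be reused verbatim as the leading block of the witness $\bm\xi$ for $\pi(\x)$ in $\hat\pi(\mathcal{Z})$. This is not automatic from $\pi(\x)\in\hat\pi(\mathcal{Z})$ alone; it requires reading off from Algorithm~\ref{alg:over} that the over-approximation construction preserves the original generators and constraints (only appending the four new columns $u_i\mathbf e_i,\mathbf 0,\mathbf 0,\mathbf 0$ and the three new rows per split neuron), exactly as the exact algorithm does in Theorem~\ref{thm:sum}. Once that structural invariant is recorded, the algebra is routine. Note that unlike Theorem~\ref{thm:sum}, here we only need one inclusion rather than set equality, since $\hat\f_{cl}$ is explicitly an over-approximation; this is consistent with $\hat\pi$ over-approximating $\pi$, and I would remark that the reverse inclusion generally fails because $\hat\pi(\mathcal{Z})$ contains points not of the form $\pi(\x)$ for any $\x\in\mathcal{Z}$.
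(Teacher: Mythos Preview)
Your proposal is correct and follows essentially the same approach as the paper's proof: pick $\x\in\mathcal{Z}$ with witness $\bm\xi_1$, use $\pi(\x)\in\hat\pi(\mathcal{Z})$ to obtain a witness $\bm\xi=[\bm\xi_1^T\;\bm\xi_2^T]^T$ sharing the same leading block, and compute $\f_{cl}(\x)=\hat\cc_{cl}+\hat\G_{cl}\bm\xi$. In fact you are more explicit than the paper about the one nontrivial step---that the witness for $\pi(\x)$ in $\hat\pi(\mathcal{Z})$ can be taken with the \emph{same} $\bm\xi_1$ prefix---which the paper handles by pointing to Line~17 of Algorithm~\ref{alg:over} and the analogous argument in Theorem~\ref{thm:sum}, whereas you spell out the column/row-appending invariant directly.
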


\begin{proof}
From the construction of $\hat\G$ in Algorithm \ref{alg:over} Line 17, we have that $n_{\hat G}\geq n_{G}$. For any $\x \in \mathcal{Z}$, $\exists \bm{\xi}_1$ such that $\x = \cc+\G\bm{\xi}_1$ and $\bm{\xi}_1\in B_{\infty}(\A,\bb)$. 
Since $\pi(\x) \in \pi(\mathcal{Z}) \subseteq \hat{\pi}(\mathcal{Z})$, there exists $\bm{\xi} = \mat{\bm{\xi}_1^T & \bm{\xi}_2^T}^T$ such that $\pi(\x) = \hat\cc+\hat\G\bm{\xi}$ and $\bm{\xi}\in B_{\infty}(\hat\A,\hat\bb)$. Thus, similar to the proof of Theorem \ref{thm:sum}, we have $\f_{cl}(\x) = \hat\cc_{cl} + \hat\G_{cl}\bm{\xi}$ where $\bm{\xi}\in B_{\infty}(\hat\A,\hat\bb)$. Therefore, $\f_{cl}(\x) \in \hat \f_{cl}(\mathcal{Z})$. Since $\x$ is arbitrary, we conclude that $\f_{cl}(\mathcal{Z})\subseteq \hat \f_{cl}(\mathcal{Z})$.
\end{proof}

Using Theorem \ref{thm:appro}, we can compute 
the over-approximated reachable sets of closed-loop system \eqref{lin-sys} as follows:
\begin{equation}\label{equ:reach-cl-over}
    \begin{aligned}
    \hat{\mathcal{R}}_0 & = \mathcal{X}_0 \\
    \hat{\mathcal{R}}_t & = \hat \f_{cl}(\hat{\mathcal{R}}_{t-1}),\; t=1,\dots,T.
    \end{aligned}
\end{equation}

\subsection{Safety Verification}
Let the exact reachable set at time $t$ computed by \eqref{equ:reach-cl} be $\mathcal{R}_t(\mathcal{X}_0) = \cup^{n_t}_{i=1}\mathcal{R}_t^i(\mathcal{X}_0) =  \cup^{n_t}_{i=1}CZ\{\cc_i^t,\G_i^t,\A_i^t,\bb_i^t\}$ for $t = 1,2,\dots,T$. Let the $N$ unsafe sets be $\mathcal{O}_j = CZ\{\cc^o_j,\G^o_j,\A^o_j,\bb^o_j\}$ for $j = 1,2,\dots,N$. The following result provides a sufficient and necessary condition on the safety verification of the closed-loop system \eqref{lin-sys}.

\begin{proposition} \label{thm:verify}
Consider the reachable sets $\mathcal{R}_1,\dots,\mathcal{R}_T$ and unsafe sets $\mathcal{O}_1,\dots,\mathcal{O}_N$ defined above, the state trajectories of the closed-loop system \eqref{lin-sys} can avoid all the unsafe regions if and only if the following condition is satisfied:
\begin{align} \label{equ:emp-check}
&\min\{||\bm{\xi}||_{\infty}\;|\; \mat{\A_i^t & \mathbf{0}\\ \mathbf{0} & \A_j^o \\ \G_i^t & -\G_j^o}\bm{\xi} = \mat{\bb_i^t\\ \bb_j^o\\ \cc_j^o - \cc_i^t}\} > 1, \\ \notag
&\forall t \in \{1,\dots,T\},\;\forall i \in \{1,\dots,n_t\},\;\forall j \in \{1,\dots,N\}.
\end{align}
\end{proposition}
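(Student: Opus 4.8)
The plan is to reduce the safety property to a finite family of emptiness checks for constrained zonotopes, and then to convert each emptiness check into the linear program appearing in \eqref{equ:emp-check}. First I would formalize the safety requirement: the trajectories of \eqref{lin-sys} avoid all unsafe regions precisely when $\mathcal{R}_t(\mathcal{X}_0) \cap \mathcal{O}_j = \emptyset$ for every $t \in \{1,\dots,T\}$ and every $j \in \{1,\dots,N\}$. Because the exact reachable set decomposes as the finite union $\mathcal{R}_t = \cup_{i=1}^{n_t}\mathcal{R}_t^i$ and intersection distributes over union, the condition $\mathcal{R}_t \cap \mathcal{O}_j = \emptyset$ is equivalent to $\mathcal{R}_t^i \cap \mathcal{O}_j = \emptyset$ holding for every $i \in \{1,\dots,n_t\}$. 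This is exactly where the triple quantification over $t$, $i$, and $j$ in \eqref{equ:emp-check} originates.

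Next I would apply the intersection identity of Lemma \ref{lemma:set-op} to each pairwise intersection. With $\mathcal{R}_t^i = CZ\{\cc_i^t,\G_i^t,\A_i^t,\bb_i^t\}$ and $\mathcal{O}_j = CZ\{\cc_j^o,\G_j^o,\A_j^o,\bb_j^o\}$, the lemma yields
\[
\mathcal{R}_t^i \cap \mathcal{O}_j = CZ\left\{\cc_i^t,\; \mat{\G_i^t & \mathbf{0}},\; \mat{\A_i^t & \mathbf{0}\\ \mathbf{0} & \A_j^o \\ \G_i^t & -\G_j^o},\; \mat{\bb_i^t\\ \bb_j^o\\ \cc_j^o - \cc_i^t}\right\},
\]
so that the constraint matrix and constraint vector of this intersection coincide exactly with the $\A$ and $\bb$ written inside the LP in \eqref{equ:emp-check}.

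Finally I would invoke the emptiness criterion of Lemma \ref{lemma:empty}, which states that a constrained zonotope $CZ\{\cc,\G,\A,\bb\}$ is nonempty if and only if $\min\{||\bm{\xi}||_\infty \mid \A\bm{\xi}=\bb\} \le 1$. Negating this, $\mathcal{R}_t^i \cap \mathcal{O}_j = \emptyset$ holds if and only if the associated minimum strictly exceeds $1$, which is precisely the inequality in \eqref{equ:emp-check}. Chaining the three equivalences — safety $\Leftrightarrow$ all pairwise reachable-set/unsafe-set intersections are empty $\Leftrightarrow$ every associated LP value exceeds $1$ — establishes both directions of the claim at once.

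Since each step is an equivalence imported directly from the two cited lemmas, there is no genuine obstacle beyond bookkeeping. The one point demanding care is checking that the constraint data produced by the intersection identity of Lemma \ref{lemma:set-op} matches, block for block, the matrix and right-hand side inside the LP, so that Lemma \ref{lemma:empty} applies verbatim with no re-indexing of the generator or constraint dimensions; this is exactly what makes the sufficient condition also necessary.
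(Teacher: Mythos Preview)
Your proposal is correct and follows essentially the same route as the paper: reduce safety to emptiness of each $\mathcal{R}_t^i \cap \mathcal{O}_j$, express that intersection via Lemma~\ref{lemma:set-op}, and then apply the LP emptiness test of Lemma~\ref{lemma:empty}. Your write-up is in fact slightly more explicit than the paper's about why the union decomposition yields the triple quantification, but the argument is the same.
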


\begin{proof}
Avoiding the unsafe regions can be equivalently expressed as none of the reachable sets intersect with any of the unsafe sets. According to Lemma \ref{lemma:set-op}, we know the intersection of the $i$-th constrained zonotope from $\mathcal{R}_t$ and the $j$-th unsafe set is also a constrained zonotope, i.e.,
$$
\begin{aligned}
\mathcal{R}_t^i \cap \mathcal{O}_j = CZ\{\cc_i^t,\mat{\G_i^t &\mathbf{0}}, \mat{\A_i^t & \mathbf{0}\\ \mathbf{0} & \A_j^o \\ \G_i^t & -\G_j^o}, \mat{\bb_i^t\\ \bb_j^o\\ \cc_j^o - \cc_i^t}\}.
\end{aligned}
$$
Using Lemma \ref{lemma:empty}, we have that $\mathcal{R}_t^i \cap \mathcal{O}_j$ is empty if and only if \eqref{equ:emp-check} is satisfied. Therefore, the avoidance of all unsafe regions can be certified if and only if \eqref{equ:emp-check} is satisfied for all $t \in \{1,\dots,T\}$, $i \in \{1,\dots,n_t\}$ and $j \in \{1,\dots,N\}$.
\end{proof}

\begin{remark}\label{remark:prop1}
Checking \eqref{equ:emp-check} requires solving $N\sum_{t=1}^T n_t$ LPs with $n_{G^t} + n_{G^O}$ variables and $2(n+n_{G^t} + n_{G^O}+n_{A^t}+n_{A^O})$ constraints. The computation time could increase exponentially with the order of the system and the order of the constrained zonotopes. To reduce the computational burden, order reduction techniques can be employed to limit the complexity of reachable sets by limiting the order of the constrained zonotopes.
\end{remark}

Given over-approximated reachable sets $\hat{\mathcal{R}}_t(\mathcal{X}_0) =CZ\{\hat \cc^t,\hat\G^t,\hat\A^t,\hat\bb^t\}$ that are computed by \eqref{equ:reach-cl-over}, we have the following result similar to Proposition \ref{thm:verify}.

\begin{proposition}\label{thm:verify2}
The state trajectories of the closed-loop system \eqref{lin-sys} can avoid all the unsafe regions $\mathcal{O}_1,\dots,\mathcal{O}_N$ if 
\begin{align}\label{equ:emp-check2}
&\min\{||\bm{\xi}||_{\infty}\;|\; \mat{\hat\A^t & \mathbf{0}\\ \mathbf{0} & \A_j^o \\ \hat\G^t & -\G_j^o}\bm{\xi} = \mat{\hat\bb^t\\ \bb_j^o\\ \cc_j^o - \hat\cc^t}\} > 1, \\ \notag
&\forall t \in \{1,\dots,T\},\;\forall j \in \{1,\dots,N\}.
\end{align}
\end{proposition}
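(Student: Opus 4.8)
The plan is to deduce Proposition~\ref{thm:verify2} from the exact-case Proposition~\ref{thm:verify} by first showing that the over-approximated reachable set really does contain the true one, and then reusing the emptiness argument almost verbatim. The single new ingredient I need is the set inclusion $\mathcal{R}_t\subseteq\hat{\mathcal{R}}_t$ for every $t=0,1,\dots,T$, which I would establish by induction on $t$. The base case holds since $\mathcal{R}_0=\mathcal{X}_0=\hat{\mathcal{R}}_0$. For the step, assume $\mathcal{R}_{t-1}\subseteq\hat{\mathcal{R}}_{t-1}$ and take any $\x\in\mathcal{R}_{t-1}$; then $\x\in\hat{\mathcal{R}}_{t-1}$, and since $\hat{\mathcal{R}}_{t-1}$ is a single constrained zonotope, Theorem~\ref{thm:appro} (whose proof in fact gives the pointwise statement $\f_{cl}(\x)\in\hat{\f}_{cl}(\mathcal{Z})$ for each $\x\in\mathcal{Z}$) yields $\f_{cl}(\x)\in\hat{\f}_{cl}(\hat{\mathcal{R}}_{t-1})=\hat{\mathcal{R}}_t$ by \eqref{equ:reach-cl-over}. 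Because $\mathcal{R}_t=\f_{cl}(\mathcal{R}_{t-1})$ by \eqref{equ:reach-cl}, this gives $\mathcal{R}_t\subseteq\hat{\mathcal{R}}_t$. I note that no separate monotonicity property of $\hat{\f}_{cl}$ is required: the pointwise inclusion inside Theorem~\ref{thm:appro} does all the work, and the argument is indifferent to whether $\mathcal{R}_{t-1}$ is a single zonotope or a union.

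With the inclusion secured, I would replay the intersection computation of Proposition~\ref{thm:verify}, replacing each exact piece $\mathcal{R}_t^i$ by the single over-approximation $\hat{\mathcal{R}}_t=CZ\{\hat\cc^t,\hat\G^t,\hat\A^t,\hat\bb^t\}$. Lemma~\ref{lemma:set-op} writes $\hat{\mathcal{R}}_t\cap\mathcal{O}_j$ as a constrained zonotope whose equality-constraint matrix and right-hand side are precisely $\mat{\hat\A^t & \mathbf{0}\\ \mathbf{0} & \A_j^o \\ \hat\G^t & -\G_j^o}$ and $\mat{\hat\bb^t\\ \bb_j^o\\ \cc_j^o - \hat\cc^t}$, the data appearing in \eqref{equ:emp-check2}. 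Lemma~\ref{lemma:empty} then certifies that this intersection is empty if and only if the minimum in \eqref{equ:emp-check2} exceeds $1$. Hence the hypothesis that \eqref{equ:emp-check2} holds for all $t\in\{1,\dots,T\}$ and $j\in\{1,\dots,N\}$ is exactly the statement $\hat{\mathcal{R}}_t\cap\mathcal{O}_j=\emptyset$ for all such $t,j$.

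To close the argument, I combine the two steps: from $\mathcal{R}_t\subseteq\hat{\mathcal{R}}_t$ and $\hat{\mathcal{R}}_t\cap\mathcal{O}_j=\emptyset$ I get $\mathcal{R}_t\cap\mathcal{O}_j\subseteq\hat{\mathcal{R}}_t\cap\mathcal{O}_j=\emptyset$, so the true reachable set computed by \eqref{equ:reach-cl} never meets any unsafe set, which is the desired avoidance property.

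I do not anticipate a real obstacle, since the result is a monotone relaxation of Proposition~\ref{thm:verify}; the value of the argument lies in being careful about two points. First, I would stress why only the ``if'' direction survives: because $\hat{\mathcal{R}}_t$ is a (possibly strict) over-approximation, it can intersect some $\mathcal{O}_j$ even when $\mathcal{R}_t$ does not, so a violation of \eqref{equ:emp-check2} does not certify an actual collision and the necessity half of Proposition~\ref{thm:verify} is forfeited. Second, I would verify column-for-column that the block matrix returned by Lemma~\ref{lemma:set-op} matches \eqref{equ:emp-check2}---in particular that the appended generator block $[\hat\G^t\;\mathbf{0}]$ introduces no spurious constraint rows---since that bookkeeping is the only place a dimension or sign error could slip in.
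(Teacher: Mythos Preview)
Your proposal is correct and is exactly the argument the paper intends: the paper does not write out a separate proof for Proposition~\ref{thm:verify2}, treating it as an immediate consequence of Proposition~\ref{thm:verify} together with the over-approximation property $\mathcal{R}_t\subseteq\hat{\mathcal{R}}_t$ from Theorem~\ref{thm:appro} and \eqref{equ:reach-cl-over}. Your write-up supplies the details the paper omits, including the induction for the inclusion and the explanation of why only the ``if'' direction survives.
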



Note that there are only $N*T$ LPs in \eqref{equ:emp-check2}, which is a significant reduction than \eqref{equ:emp-check}.


\section{Reachability analysis and Safety Verification for Neural Feedback System with Nonlinear Model} \label{sec:nonlin}

In this section we extend the reachability analysis and safety verification results in the preceding section to the following neural feedback system:
\begin{equation}\label{equ:nonlin}
    \x(t+1) = \f(\x(t)) + B_d \pi(\x(t))
\end{equation}
where $\f$ is assumed to be of class $\mathcal{C}^2$. Let $\f_q$ denote the $q$-th component of function $\f$ and $\mathbf{H} \f_q$ denote the upper triangular matrix describing half of the Hessian of $\f_q$ (i.e. $\mathbf{H}_{ii}\f_q = \frac{\partial^2 \f_q }{2 \partial {\x}_i^2}$, $\mathbf{H}_{ij}\f_q = \frac{\partial^2 \f_q }{\partial {\x}_i \partial \x_j}$ for $i<j$ and $\mathbf{H}_{ij}\f_q = 0$ for $i>j$). Denote $\f_{cl}(\x) = \f(\x) + B_d \pi(\x)$. 

The following proposition provides a method to over-approximate the range of $\f$ using constrained zonotopes.

\begin{proposition}\label{prop:taylor}  \cite[Prop. 2]{rego2021set}
Let $\f: \mathbb{R}^{n} \rightarrow \mathbb{R}^{n}$ be of class $\mathcal{C}^{2}$, and let $\mathcal{X}=CZ\{\mathbf{c},\mathbf{G} , \mathbf{A}, \mathbf{b}\} \subset \mathbb{R}^n$ be a constrained zonotopes with $n_{G}$ generators and $n_{A}$ constraints. For each $q=1,2, \ldots, n$, let $[\mathbf{Q}^{[q]}] \in$ $\mathbb{I R}^{n\times n}$ and $[\tilde{\mathbf{Q}}^{[q]}] \in \mathbb{I R}^{n_{G} \times n_{G}}$ be interval matrices satisfying $[\mathbf{Q}^{[q]}] \supseteq \mathbf{H}_{x} \f_{q}(\Box \mathcal{X} )$ and $[\tilde{\mathbf{Q}}^{[q]}] \supseteq \mathbf{G}^{T} [\mathbf{Q}^{[q]}] \mathbf{G}$. Moreover, define $\tilde{\mathbf{c}}, \tilde{\mathbf{G}}, \tilde{\mathbf{G}}_{\mathbf{d}}, \tilde{\mathbf{A}}$, and $\tilde{\mathbf{b}}$, as in Lemma 2 in \cite{rego2021set}. Finally, choose any $\bm{\gamma}_{x} \in \Box \mathcal{X}$ and let $[\mathbf{L}] \in \mathbb{I R}^{n \times n}$ be an interval matrix satisfying $[\mathbf{L}]_{q,:} \supseteq\left(\mathbf{c}-\gamma_{x}\right)^{T} [\mathbf{Q}^{[q]}]$ for all $q=1, \ldots$, n. Then,
\begin{equation}\label{equ:range}
\f(\mathcal{X}) \subseteq \f \left(\bm{\gamma}_{x}\right) \oplus \nabla_{x}^{T} \f\left(\bm{\gamma}_{x}\right)\left(\mathcal{X}-\bm{\gamma}_{x}\right) \oplus \mathcal{R}  
\end{equation}
where $\mathcal{R}=\tilde{\mathbf{c}} \oplus\left[\tilde{\mathbf{G}}\;\; \tilde{\mathbf{G}}_{\mathbf{d}}\right] B_{\infty}(\tilde{\mathbf{A}}, \tilde{\mathbf{b}}) \oplus \triangleleft ([\mathbf{L}],\left(\mathbf{c}-\bm{\gamma}_{x}\right)$ $\oplus 2 \mathbf{G} B_{\infty}(\mathbf{A}, \mathbf{b}) ) .$
\end{proposition}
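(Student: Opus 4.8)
The plan is to reduce the proposition to a componentwise second-order Taylor expansion of $\f$ about the anchor point $\bm{\gamma}_x$ and then over-approximate each piece of the expansion with a constrained zonotope. Concretely, I would first fix $\x \in \mathcal{X}$ and apply Taylor's theorem with Lagrange remainder to each scalar component $\f_q$: since $\f_q \in \mathcal{C}^2$ and the segment $[\bm{\gamma}_x, \x]$ lies in the convex box $\Box\mathcal{X}$, there is a point $\bm{\eta}_q$ on that segment with $\f_q(\x) = \f_q(\bm{\gamma}_x) + \nabla\f_q(\bm{\gamma}_x)^T(\x - \bm{\gamma}_x) + (\x-\bm{\gamma}_x)^T\mathbf{H}_x\f_q(\bm{\eta}_q)(\x-\bm{\gamma}_x)$. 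Stacking over $q$, the constant terms assemble into the point $\f(\bm{\gamma}_x)$, the linear terms into $\nabla_x^T\f(\bm{\gamma}_x)(\x-\bm{\gamma}_x)$, which ranges over the constrained zonotope $\nabla_x^T\f(\bm{\gamma}_x)(\mathcal{X}-\bm{\gamma}_x)$ by Lemma~\ref{lemma:set-op}, and the quadratic remainders into a vector that I must enclose inside $\mathcal{R}$.

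Next I would expose the generator structure of the remainder. Writing $\x = \mathbf{c} + \mathbf{G}\bm{\xi}$ with $\bm{\xi}\in B_\infty(\mathbf{A},\mathbf{b})$, so that $\x - \bm{\gamma}_x = (\mathbf{c}-\bm{\gamma}_x) + \mathbf{G}\bm{\xi}$, each quadratic remainder splits into a constant part $(\mathbf{c}-\bm{\gamma}_x)^T\mathbf{H}_x\f_q(\bm{\eta}_q)(\mathbf{c}-\bm{\gamma}_x)$, a bilinear cross part $2(\mathbf{c}-\bm{\gamma}_x)^T\mathbf{H}_x\f_q(\bm{\eta}_q)\mathbf{G}\bm{\xi}$, and a pure quadratic part $\bm{\xi}^T(\mathbf{G}^T\mathbf{H}_x\f_q(\bm{\eta}_q)\mathbf{G})\bm{\xi}$. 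Because $\bm{\eta}_q \in \Box\mathcal{X}$, every occurrence of the unknown Hessian is contained in the interval enclosure $[\mathbf{Q}^{[q]}] \supseteq \mathbf{H}_x\f_q(\Box\mathcal{X})$; this is the device that makes the remainder computable without knowing $\bm{\eta}_q$. Substituting $[\mathbf{Q}^{[q]}]$ for the Hessian everywhere, the cross part is enclosed by the image of $(\mathbf{c}-\bm{\gamma}_x)\oplus 2\mathbf{G}B_\infty(\mathbf{A},\mathbf{b})$ under the interval matrix $[\mathbf{L}]$ (using $[\mathbf{L}]_{q,:}\supseteq (\mathbf{c}-\bm{\gamma}_x)^T[\mathbf{Q}^{[q]}]$ and the $\triangleleft$ operator of \cite[Theorem 1]{rego2020guaranteed}), while the pure quadratic part, after tightening $\mathbf{G}^T[\mathbf{Q}^{[q]}]\mathbf{G}\subseteq[\tilde{\mathbf{Q}}^{[q]}]$, reduces to enclosing the vector of constraint-coupled quadratic forms $(\bm{\xi}^T[\tilde{\mathbf{Q}}^{[q]}]\bm{\xi})_{q=1}^n$ over $\bm{\xi}\in B_\infty(\mathbf{A},\mathbf{b})$.

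The main obstacle, and the technical heart of the argument, is this last enclosure of a quadratic form as a single constrained zonotope, which is exactly the content of Lemma~2 of \cite{rego2021set} and supplies $\tilde{\mathbf{c}}, \tilde{\mathbf{G}}, \tilde{\mathbf{G}}_{\mathbf{d}}, \tilde{\mathbf{A}}, \tilde{\mathbf{b}}$. The key idea I would invoke there is to handle the diagonal contributions $\xi_i^2$ exactly via the substitution $\xi_i^2 = \tfrac12(1+\zeta_i)$ with a fresh generator $\zeta_i\in[-1,1]$, and to bound the off-diagonal bilinear products $\xi_i\xi_j$ by interval arithmetic collected into $\tilde{\mathbf{G}}_{\mathbf{d}}$, while the equality constraints $\mathbf{A}\bm{\xi}=\mathbf{b}$ are carried through into $\tilde{\mathbf{A}},\tilde{\mathbf{b}}$ so that no tightness is lost on the feasible set. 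Finally I would assemble $\mathcal{R}$ as the Minkowski sum of the offset $\tilde{\mathbf{c}}$, the quadratic-form enclosure $[\tilde{\mathbf{G}}\;\;\tilde{\mathbf{G}}_{\mathbf{d}}]B_\infty(\tilde{\mathbf{A}},\tilde{\mathbf{b}})$, and the cross-term enclosure $\triangleleft([\mathbf{L}],(\mathbf{c}-\bm{\gamma}_x)\oplus 2\mathbf{G}B_\infty(\mathbf{A},\mathbf{b}))$, and conclude that, since $\x\in\mathcal{X}$ was arbitrary, $\f(\mathcal{X})$ lies in the claimed Minkowski sum. The only remaining care is the factor bookkeeping so that the $\tfrac12$ of the Taylor remainder matches the half-Hessian convention $\mathbf{H}\f_q$ adopted in the surrounding section.
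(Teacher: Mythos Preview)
The paper does not supply its own proof of this proposition: it is quoted verbatim as \cite[Prop.~2]{rego2021set} and used as a black box, so there is nothing in the paper to compare your argument against. Your sketch is a faithful reconstruction of the standard proof strategy one would expect in the cited reference---componentwise second-order Taylor expansion about $\bm{\gamma}_x$, replacement of the unknown intermediate Hessians by the interval enclosure $[\mathbf{Q}^{[q]}]$, the split of the remainder into a pure quadratic part (handled by Lemma~2 of \cite{rego2021set}) and a cross term (handled by the $\triangleleft$ operator), followed by a Minkowski-sum assembly---and it correctly flags the half-Hessian bookkeeping.

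One small point worth tightening: in your decomposition the ``constant part'' $(\mathbf{c}-\bm{\gamma}_x)^T\mathbf{H}_x\f_q(\bm{\eta}_q)(\mathbf{c}-\bm{\gamma}_x)$ is not actually a constant (it still depends on the unknown $\bm{\eta}_q$), and in the statement of the proposition it is not carried separately but is absorbed into the $\triangleleft$ term via the offset $(\mathbf{c}-\bm{\gamma}_x)$ in its second argument. Making that explicit would close the only visible gap between your outline and the stated form of $\mathcal{R}$.
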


Let $\hat\f(\mathcal{X}) = \f \left(\bm{\gamma}_{x}\right) \oplus \nabla_{x}^{T} \f\left(\bm{\gamma}_{x}\right)\left(\mathcal{X}-\bm{\gamma}_{x}\right) \oplus \mathcal{R}$. Since $\mathcal{R}$ defined in Proposition \ref{prop:taylor} is a constrained zonotope, let $\mathcal{R} = CZ\{\cc_R,\G_R,\A_R,\bb_R\}$. Using the set operations of constrained zonotopes in Lemma \ref{lemma:set-op}, we can get
\begin{align} 
    \hat\f(\mathcal{X}) =  & CZ\{\nabla_{x}^{T} \f\left(\bm{\gamma}_{x}\right) (\cc - \bm{\gamma}_{x}) + \f \left(\bm{\gamma}_{x}\right), \nabla_{x}^{T} \f\left(\bm{\gamma}_{x}\right) \G , \nonumber\\ & \A,\bb\} \oplus CZ\{\cc_R,\G_R,\A_R,\bb_R\} \nonumber\\
    = & CZ\{\cc_f,\G_f,\A_f,\bb_f\}\label{equ:hat-f}
\end{align}
where 
\begin{align*}
    \cc_f & = \nabla_{x}^{T} \f\left(\bm{\gamma}_{x}\right) (\cc - \bm{\gamma}_{x}) + \f \left(\bm{\gamma}_{x}\right) + \cc_R , \bb_f = \mat{\bb\\ \bb_R}, \\ \G_f &= \mat{\nabla_{x}^{T} \f\left(\bm{\gamma}_{x}\right) \G & \G_R}, \A_f = \mat{\A & \mathbf{0}\\ \mathbf{0} & \A_R}.
\end{align*}


\begin{theorem}\label{thm:nonlin}
Given any constrained zonotope $\mathcal{Z} = CZ\{\cc,\G,\A,\bb\} \subset\mathbb{R}^n$, $\G\in\mathbb{R}^{n\times n_G}$, $\A\in \mathbb{R}^{n_A\times n_G}$, let $\pi(\mathcal{Z})$ be the computed output set using Algorithm \ref{alg:exact}, i.e., $\pi(\mathcal{Z}) = \cup_{i=1}^{n_{\pi}} CZ\{\cc_i,\G_i,\A_i,\bb_i\} \subset \mathbb{R}^m$. Let $\hat \f(\mathcal{Z}) = CZ\{\cc_f,\G_f,\A_f,\bb_f\}$ be computed as in \eqref{equ:hat-f}. Let $n_{G_i}$ be the number of columns of $\G_i$ and $n_{G_f}$ be the number of columns of $\G_f$. Then, an over-approximated range of $\f_{cl}(\mathcal{Z})$ can be computed as:
\begin{equation}\label{equ:g-cl}
    \hat \f_{cl}(\mathcal{Z}) = \cup_{i=1}^{n_{\pi}} CZ\{\hat\cc_i^{cl},\hat\G_i^{cl},\hat\A_i^{cl},\hat\bb_i^{cl}\}\supseteq \f_{cl}(\mathcal{Z}),
\end{equation}
where 
\begin{align*}
\hat\G_i^{cl} &= \mat{\nabla_{x}^{T} \f\left(\bm{\gamma}_{x}\right) \G &\mathbf{0}_{n \times (n_{G_i}-n_G)} & \G_R}\\
&\quad\quad +B_d \mat{\G_{i} & \mathbf{0}_{m \times (n_{G_f}-n_G)}},\\
\hat\cc_i^{cl} &= \cc_f + B_d\cc_i,\;\hat\A_i^{cl} = \mat{\A_i & \mathbf{0} \\ \mathbf{0} & \A_R},\;\hat\bb_i^{cl}=\mat{\bb_i\\ \bb_{R}}.
\end{align*}
\end{theorem}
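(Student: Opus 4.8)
The plan is to prove only the inclusion $\f_{cl}(\mathcal{Z}) \subseteq \hat\f_{cl}(\mathcal{Z})$, since that is all the statement asserts; the argument will mirror the over-approximation proof of Theorem \ref{thm:appro}, but it must now fuse two different enclosures — the \emph{exact} union-of-constrained-zonotopes output $\pi(\mathcal{Z})$ from Algorithm \ref{alg:exact} and the Taylor-based enclosure $\hat\f(\mathcal{Z})$ of $\f(\mathcal{Z})$ from Proposition \ref{prop:taylor}. The central device is to track the generator variable $\bm{\xi}_1$ that parameterizes a point $\x\in\mathcal{Z}$ and to exploit that this same $\bm{\xi}_1$ reappears, as the first $n_G$ coordinates, in the representations of both $\f(\x)$ inside $\hat\f(\mathcal{Z})$ and $\pi(\x)$ inside its union member. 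Fixing this shared variable is exactly what avoids a lossy Minkowski sum and produces the stated coefficient $\nabla_{x}^{T}\f(\bm{\gamma}_x)\G + B_d\G_i^{(1)}$ on $\bm{\xi}_1$.

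First I would fix an arbitrary $\x = \cc + \G\bm{\xi}_1 \in \mathcal{Z}$ with $\bm{\xi}_1 \in B_{\infty}(\A,\bb)$ and produce the two needed representations. For $\f$, I would invoke the \emph{pointwise} first-order Taylor identity underlying Proposition \ref{prop:taylor}, namely $\f(\x) = \f(\bm{\gamma}_x) + \nabla_{x}^{T}\f(\bm{\gamma}_x)(\x-\bm{\gamma}_x) + r(\x)$ with the remainder $r(\x)\in\mathcal{R} = CZ\{\cc_R,\G_R,\A_R,\bb_R\}$, and then expand $\nabla_{x}^{T}\f(\bm{\gamma}_x)(\x-\bm{\gamma}_x) = \nabla_{x}^{T}\f(\bm{\gamma}_x)(\cc-\bm{\gamma}_x) + \nabla_{x}^{T}\f(\bm{\gamma}_x)\G\bm{\xi}_1$. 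Recalling $\cc_f$ from \eqref{equ:hat-f}, this yields $\f(\x) = \cc_f + \nabla_{x}^{T}\f(\bm{\gamma}_x)\G\bm{\xi}_1 + \G_R\bm{\eta}_R$ for some $\bm{\eta}_R\in B_{\infty}(\A_R,\bb_R)$, with the linear part tied to the very same $\bm{\xi}_1$. For $\pi$, I would reuse the structure established in the proof of Theorem \ref{thm:sum}: there is an index $i$ with $\pi(\x) = \cc_i + \G_i\bm{\xi}$, where $\bm{\xi} = [\bm{\xi}_1^T\ \bm{\xi}_2^T]^T \in B_{\infty}(\A_i,\bb_i)$ and the first $n_G$ entries of $\bm{\xi}$ coincide with $\bm{\xi}_1$.

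Next I would add the two expressions: $\f_{cl}(\x) = \f(\x) + B_d\pi(\x)$ has constant part $\hat\cc_i^{cl} = \cc_f + B_d\cc_i$, and collecting the coefficients of $\bm{\xi}_1$, $\bm{\xi}_2$, $\bm{\eta}_R$ — after splitting $\G_i = [\G_i^{(1)}\ \G_i^{(2)}]$ along its first $n_G$ columns so that $B_d\G_i\bm{\xi} = B_d\G_i^{(1)}\bm{\xi}_1 + B_d\G_i^{(2)}\bm{\xi}_2$ — reproduces exactly the block matrix $\hat\G_i^{cl}$. Here I would pause to check the dimensional bookkeeping: since \eqref{equ:hat-f} gives $\G_f = [\nabla_{x}^{T}\f(\bm{\gamma}_x)\G\ \ \G_R]$, the quantity $n_{G_f}-n_G$ equals the number of columns of $\G_R$, so the two summands defining $\hat\G_i^{cl}$ have matching widths. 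Setting $\bm{\zeta} = [\bm{\xi}^T\ \bm{\eta}_R^T]^T$ then gives $\f_{cl}(\x) = \hat\cc_i^{cl} + \hat\G_i^{cl}\bm{\zeta}$, and membership $\bm{\zeta}\in B_{\infty}(\hat\A_i^{cl},\hat\bb_i^{cl})$ is immediate: $\|\bm{\zeta}\|_\infty \le 1$ from $\|\bm{\xi}\|_\infty\le 1$ and $\|\bm{\eta}_R\|_\infty\le 1$, while the block-diagonal constraint $\hat\A_i^{cl}\bm{\zeta} = \hat\bb_i^{cl}$ decouples into $\A_i\bm{\xi}=\bb_i$ and $\A_R\bm{\eta}_R=\bb_R$, both already known. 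Hence $\f_{cl}(\x)$ lies in the $i$-th union member, and since $\x$ was arbitrary the inclusion follows.

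I expect the main obstacle to be not the algebra of the sum but the justification that a single $\bm{\xi}_1$ legitimately parameterizes $\x$, the linear Taylor part of $\f(\x)$, and the first $n_G$ generators of $\pi(\x)$ simultaneously. For $\pi$ this shared-variable property is inherited from Algorithm \ref{alg:exact} exactly as argued in Theorem \ref{thm:sum}, because linear maps and halfspace intersections only append generators and constraints while leaving the original $n_G$ columns in place. For $\f$ the delicate point is that the set-level statement of Proposition \ref{prop:taylor} alone would permit the linear term to be evaluated at a decoupled point $\x'\neq\x$, giving a looser representation; it is the pointwise Taylor identity behind that proposition — where $\x$, and hence $\bm{\xi}_1$, enters the linear term directly through $\mathcal{Z}$'s generators — that forces the tight, shared parameterization matching $\hat\G_i^{cl}$. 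Making this identification precise is what guarantees the enclosure is the claimed union rather than a coarser over-approximation.
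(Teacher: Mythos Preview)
Your proof is correct and follows essentially the same route as the paper: fix $\x=\cc+\G\bm{\xi}_1$, obtain $\pi(\x)=\cc_i+\G_i[\bm{\xi}_1^T\ \bm{\xi}_2^T]^T$ from Algorithm~\ref{alg:exact} as in Theorem~\ref{thm:sum}, obtain $\f(\x)=\cc_f+\G_f[\bm{\xi}_1^T\ \bm{\xi}_3^T]^T$ with the same $\bm{\xi}_1$, stack into $[\bm{\xi}_1^T\ \bm{\xi}_2^T\ \bm{\xi}_3^T]^T$, and verify membership in $CZ\{\hat\cc_i^{cl},\hat\G_i^{cl},\hat\A_i^{cl},\hat\bb_i^{cl}\}$. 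Your treatment of the $\f$-side is in fact slightly more careful than the paper's: where the paper simply asserts that $\f(\x)$ can be written with the \emph{same} $\bm{\xi}_1$ in the first block, you justify this via the pointwise Taylor identity $\f(\x)=\f(\bm{\gamma}_x)+\nabla_x^T\f(\bm{\gamma}_x)(\x-\bm{\gamma}_x)+r(\x)$ with $r(\x)\in\mathcal{R}$, which is exactly the missing link between the set-level statement of Proposition~\ref{prop:taylor} and the shared-generator claim.
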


\begin{proof}
For any $\x \in \mathcal{Z}$, $\exists \bm{\xi}_1 \in B_{\infty}(\A,\bb)$ such that $\x = \cc+\G\bm{\xi}_1$. Since $\pi(\x) \in \pi(\mathcal{Z})$, there must exist $i\in\{1,2,\dots,n_{\pi}\}$ and $\bm{\xi}_2$ such that $\mat{\bm{\xi}_1^T & \bm{\xi}_2^T}^T\in B_{\infty}(\A_i,\bb_i)$ and $\pi(\x) = \cc_i + \G_i\mat{\bm{\xi}_1^T & \bm{\xi}_2^T}^T$. From $\mat{\bm{\xi}_1^T & \bm{\xi}_2^T}^T\in B_{\infty}(\A_i,\bb_i)$, we have $ \A_i \mat{\bm{\xi}_1^T & \bm{\xi}_2^T}^T = \bb_i$. Similarly, since $\f(\x) \in \f(\mathcal{Z})$, there exists $\bm{\xi}_3$ such that $\f(\x) = \cc_f + \G_f \mat{\bm{\xi}_1^T & \bm{\xi}_3^T}^T$ and $\mat{\bm{\xi}_1^T & \bm{\xi}_3^T}^T \in B_{\infty}(\A_f,\bb_f)$. Using $\mat{\bm{\xi}_1^T & \bm{\xi}_3^T}^T \in B_{\infty}(\A_f,\bb_f)$, we can get 
$$
\A_f \mat{\bm{\xi}_1 \\ \bm{\xi}_3} = \bb_f \Rightarrow \mat{\A & \mathbf{0}\\ \mathbf{0} & \A_R}  \mat{\bm{\xi}_1 \\ \bm{\xi}_3} = \mat{\bb\\ \bb_R} \Rightarrow \A_R \bm{\xi}_3 = \bb_R.
$$

Let $\bm{\xi} = \mat{\bm{\xi}_1^T & \bm{\xi}_2^T & \bm{\xi}_3^T}^T$, then
$$
\begin{aligned}
\hat\A_i^{cl}\bm{\xi} = \mat{\A_i & \mathbf{0} \\ \mathbf{0} & \A_R} \mat{\bm{\xi}_1 \\ \bm{\xi}_2\\ \bm{\xi}_3} = \mat{\A_i  \mat{\bm{\xi}_1 \\ \bm{\xi}_2} \\ \A_R \bm{\xi}_3} = \mat{\bb_i\\\bb_R}.
\end{aligned}
$$

Because $|| \mat{\bm{\xi}_1^T & \bm{\xi}_2^T}^T ||_{\infty} \leq 1$ and $|| \mat{\bm{\xi}_1^T & \bm{\xi}_3^T}^T ||_{\infty} \leq 1$, we have $||\bm{\xi}||_{\infty}\leq 1$. Thus, $\bm{\xi} \in B_{\infty}(\hat\A_i^{cl}, \hat\bb_i^{cl})$. We know 
$$
\begin{aligned}
\f_{cl}(\x)  =& \f(\x) + B_d\pi(\x) \\=& \cc_f + \G_f \mat{\bm{\xi}_1^T & \bm{\xi}_3^T}^T + B_d(\cc_i + \G_i\mat{\bm{\xi}_1^T & \bm{\xi}_2^T}^T) \\=& \cc_f + B_d\cc_i + (\mat{\nabla_{x}^{T} \f\left(\bm{\gamma}_{x}\right) \G &\mathbf{0} & \G_R}\\ & +B_d \mat{\G_{i} & \mathbf{0}})\bm{\xi} \\=& \hat\cc_i^{cl} + \hat\G_i^{cl} \bm{\xi}
\end{aligned}
$$
Therefore, $\f_{cl}(\x) \in CZ\{\hat\cc_i^{cl},\hat\G_i^{cl},\hat\A_i^{cl},\hat\bb_i^{cl}\} \subseteq \hat \f_{cl}(\mathcal{Z})$. Since $\x$ is arbitrary, we get $\f_{cl}(\mathcal{Z}) \subseteq \hat \f_{cl}(\mathcal{Z})$ which completes the proof.
\end{proof}

Using the same formula as in Corollary \ref{cor:sum}, Theorem \ref{thm:nonlin} can be extended to the case where the input set is a union of constrained zonotopes. Given the initial set $\mathcal{X}_0$ as a constrained zonotope, we can compute 
the over-approximated reachable sets of closed-loop system \eqref{equ:nonlin} as follows:
\begin{equation}\label{equ:reach-cl-nonlin}
    \begin{aligned}
    \hat{\mathcal{R}}_0^{nl} & = \mathcal{X}_0 \\
    \hat{\mathcal{R}}_t^{nl} & = \hat\f_{cl}(\hat{\mathcal{R}}_{t-1}^{nl}),\; t=1,\dots,T.
    \end{aligned}
\end{equation}

Safety verification for system \eqref{equ:nonlin} can be done similar to Proposition \ref{thm:verify}, by formulating LPs to check the emptiness of $\hat{\mathcal{R}}_t^{nl}\cap\mathcal{O}_j$ for $t\in\{1,\dots,T\}$ and $j\in\{1,\dots,N\}$. The details are omitted due to the space limitation.

\begin{remark}
By replacing $\pi(\mathcal{Z})$ in Theorem \ref{thm:nonlin} with an over-approximated set $\hat\pi(\mathcal{Z})$ using Algorithm \ref{alg:over}, we can reduce the computational complexity of getting $\hat \f_{cl}(\mathcal{Z})$. However, in this case, both the linearization error and the FNN over-approximation error will appear in the set propagation. In \cite{sidrane2022overt}, an algorithm is  proposed to abstract nonlinear functions with a set of optimally tight piecewise linear bounds which can be integrated with the set-based method in this work. 
\end{remark}


\section{Simulation}\label{sec:sim}
In this section, we demonstrate the performance of the proposed reachability analysis methods using two simulation examples. 


\subsection{Double Integrator Example}

Consider a double integrator model \cite{hu2020reach,everett2021reachability}:
$$
{\x}{(t+1)}=\left[\begin{array}{ll}
1 & 1 \\
0 & 1
\end{array}\right] {\x}(t)+\left[\begin{array}{c}
0.5 \\
1
\end{array}\right] {\u}(t).
$$
The feedback controller is set to be a 3-layer FNN with ReLU activation functions and the same parameters as used in \cite{everett2021reachability}. We implement both Algorithm \ref{alg:exact} and Algorithm \ref{alg:over} to get exact and over-approximated output sets of the FNN and then utilize Corollary \ref{cor:sum} and Theorem \ref{thm:appro} to compute the reachable sets of the closed-loop system for $T = 5$ time steps. The initial set is given by $[2.5,3.0]\times[-0.25,0.25]$.

We denote the proposed exact reachability analysis method based on \eqref{equ:reach-cl} and Theorem \ref{thm:sum} as Reach-CZ and denote the over-approximation reachability analysis method based on \eqref{equ:reach-cl-over} and Theorem \ref{thm:appro} as Reach-CZ-Approx. We use the open-source Python toolboxes \emph{nn\_robustness\_analysis} (\cite{Everettgithub}) to run the Reach-LP algorithm (\cite{everett2021reachability}) and the Reach-SDP algorithm (\cite{hu2020reach}), and the versions with Greedy Sim-Guided Partition (\cite{everett2020robustness}) for the initial sets, i.e., Reach-LP-Partition and Reach-SDP-Partition. All the parameters are kept as default. 
Table \ref{tbl} summarizes the computation times and set over-approximation errors for the proposed method and other state-of-the-art methods including Reach-LP, Reach-LP-Partition, Reach-SDP, and Reach-SDP-Partition. The approximation errors are computed based on the difference ratio of sizes of over-approximated reachable sets and exact reachable sets at the last time step. Note   that the proposed Reach-CZ method can return the \emph{exact} reachable sets within a reasonable time. Note also that the proposed Reach-CZ-Approx method provides a better balance between efficiency and accuracy. Using about half the time consumed by Reach-LP-Partition, Reach-CZ-Approx achieves an approximation error that is over 20 times smaller than Reach-LP-Partition. Our constrained zonotope-based algorithms are implemented in Python with MOSEK \cite{andersen2000mosek}. All algorithms are tested in a computer with 3.7 GHz CPU and 32 GB memory.

Figure \ref{fig:linear-exp} illustrates reachable sets of the double integrator system using different methods.  It can be observed that our method provides more accurate reachable sets for all the time steps compared with  other methods. This is beneficial to avoid false unsafe detection in the safety verification problem; 
for example, with the unsafe region given in Figure \ref{fig:linear-exp}, Reach-CZ and Reach-CZ-Approx can verify the safety of the neural feedback system while other methods can not. 


\begin{table}[!ht]
\centering
\scalebox{1.1}{
\begin{tabular}{|c|c|c|}
\hline
Algorithm                  & Runtime [s] & Approx. Error \\ \hline
Reach-CZ  (ours)           & 1.214 &  0  \\ 
Reach-CZ-Approx (ours) &  0.320  &  0.8 \\ \hline
Reach-LP \cite{everett2021reachability}                   &   0.031      &     330                \\ 
Reach-LP-Partition         &    0.891     &         19            \\ 
Reach-SDP \cite{hu2020reach}       & 56.03     &     207   \\ 
Reach-SDP-Partition        &   2048.89  &   11   \\ \hline
\end{tabular}}
\caption{Comparison of different reachability-based methods for the double integrator example. Reach-CZ returns exact reachable sets within a reasonable time. Compared with Reach-LP-Partition, Reach-CZ-Approx achieves over 20 times smaller errors using about half its time. }
\label{tbl}
\end{table}

\begin{figure}[!ht]
    \centering
        \includegraphics[width=0.43\textwidth]{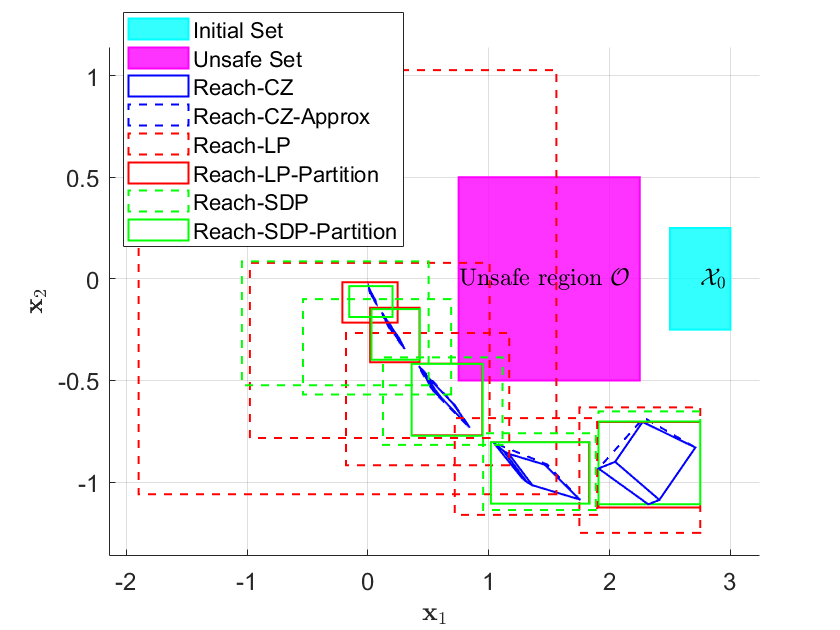}
     \caption{Reachable sets computed for the double integrator example. The initial set $\mathcal{X}_0$ is shown in cyan and the unsafe region represented by set $\mathcal{O}$ is in magenta. Reachable sets computed by Reach-CZ and Reach-CZ-Approx are bounded by blue solid lines and blue dashed lines respectively. Reachable sets computed by the LP-based method (\cite{everett2021reachability}) is in red while the SDP-based method (\cite{hu2020reach}) is in green.}
      \label{fig:linear-exp}
\end{figure}

\subsection{Nonlinear System Example}
Consider the following discrete-time Duffing Oscillator model from \cite{dang2012reachability}:
$$
\begin{aligned}
x_1(t+1) &= x_1(t)+0.3 x_2(t)\\
x_2(t+1) &= 0.3 x_1(t) + 0.82 x_2(t) -0.3 [x_1(t)]^3 + 0.3 u(t)
\end{aligned}
$$
where $x_1,x_2 \in \mathbb{R}$ are the states and $u\in \mathbb{R}$ is the control input. We train an FNN controller to approximate the control law described in \cite{dang2012reachability} and apply the algorithm based on Theorem \ref{thm:nonlin} for $T = 2$ time steps. Figure \ref{fig:nonlinear-exp} shows the computed reachable sets and 1000 randomly generated sample trajectories. The sampled states are contained in the over-approximated reachable sets as expected. For comparison, we also apply the method based on quadratically constrained quadratic programming (QCQP) in \cite{everett2021reachability}, denoted as Reach-QCQP, to approximate the reachable sets for the Duffing Oscillator neural feedback system. Figure \ref{fig:nonlinear-exp} indicates that although all the three methods only over-approximate the reachable sets, our constrained zonotope-based methods tend to have tiger bounds on the sampled states.

\begin{figure}[!t]
    \centering
        \includegraphics[width=0.40\textwidth]{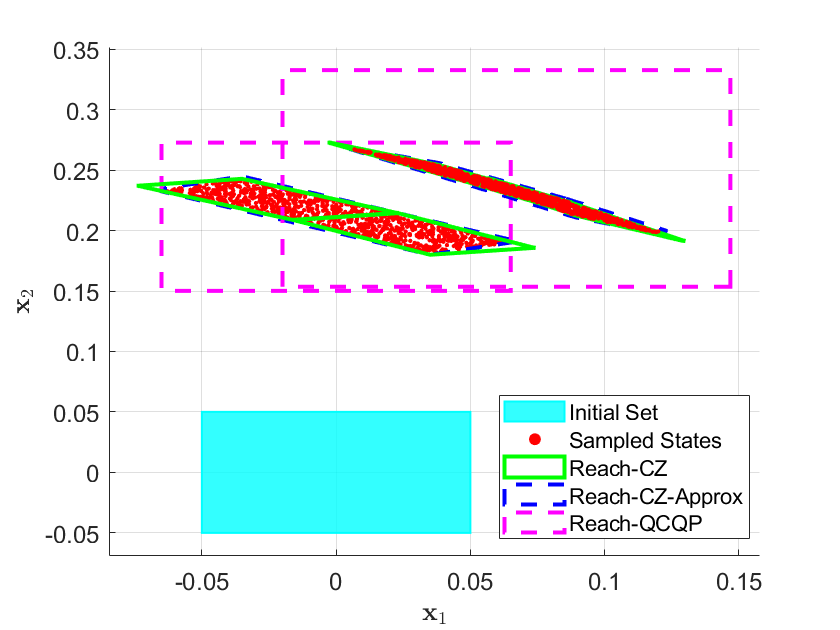}
     \caption{Reachable sets computed for the Duffing Oscillator system. Initial set $\mathcal{X}_0$ is shown in cyan. Reachable sets computed by Reach-CZ and Reach-CZ-Approx are bounded by green solid lines and blue dashed lines respectively. Sampled states from 1000 randomly generated initial conditions are plotted as red dots which are bounded by over-approximated reachable sets. Reachable sets computed by the QCQP-based method in \cite{everett2021reachability} are shown by purple dashed lines. }
      \label{fig:nonlinear-exp}
\end{figure}

\section{Conclusion}\label{sec:concl}
In this paper, we proposed a constrained zonotope-based method for analyzing the exact and over-approximated reachable sets of neural feedback systems. The exact reachable set of the neural feedback system is a union of constrained zonotopes and can be computed in a reasonable amount of time. The over-approximated method has much higher time efficiency than the exact method with a slight loss of accuracy. Based on the reachability analysis, we provided two LP-based conditions for safety verification of the neural feedback system. We also extended the proposed methods to a class of nonlinear systems. For future work, we 
plan to explore the tunability of constrained zonotopes to achieve a better trade-off between computational efficiency and  approximating accuracy.


\bibliographystyle{IEEEtran}
\bibliography{ref}

\end{document}